\newtheorem{remark}{Remark}
\newtheorem{lemma}{Lemma}
\newtheorem{theorem}{Theorem}
\newtheorem{corollary}{Corollary}
\newtheorem{definition}{Definition}
\newcommand{\PERMS}[2][]{\mathbb{P}^{#1}_{#2}}
\newcommand{\MISSING}[2][]{\mathbb{M}^{#1}_{#2}}
\newcommand{\SigmaCycles}[1]{S(#1)}
\newcommand{\AltCycles}[1]{A(#1)}
\newcommand{\CW}{\includegraphics[height=0.5em]{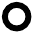}}
\newcommand{\CCW}{\includegraphics[height=0.5em]{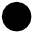}}
\newcommand{\dart}[2]{#1 \rightarrow #2}
\newcommand{\xor}{\oplus}
\newcommand{\setdiff}{\backslash}
\newcommand{\nset}[1]{\llbracket #1 \rrbracket}
\newcommand{\quotient}[1]{#1\slash{\sim}}
\newcommand{\walk}[1]{\mathsf{walk}(#1)}
\newcommand{\edges}[1]{\mathsf{edges}(#1)}
\newcommand{\walkName}{\mathsf{walk}}
\newcommand{\edgesName}{\mathsf{edges}}
\newcommand{\parent}[1]{\mathsf{parent}(#1)}
\newcommand{\parentWalk}{\mathsf{parent}}
\newcommand{\sink}[2]{\mathsf{sink}(#1,#2)}
\newcommand{\FixSubset}[3][]{\mathsf{F}_{#1}(#2,#3)}
\newcommand{\YCover}[1]{\mathbf{Y}\!_2(#1)}
\newcommand{\YCycle}[1]{\mathbf{Y}\!_1(#1)}
\newcommand{\ACover}[1]{A_2(#1)}
\newcommand{\ACycle}[1]{A_1(#1)}
\newcommand{\WilfCycle}[1]{\mathsf{Wilf}_1(#1)}
\newcommand{\WilfCover}[1]{\mathsf{Wilf}_2(#1)}
\newcommand{\WilfC}[1]{\mathsf{W}_1(#1)}
\newcommand{\WilfP}[1]{\mathsf{W}_2(#1)}
\newcommand{\TheDigraph}[1]{\mathcal{G}(#1)}
\newcommand{\TheCayleyDigraph}[1]{\protect \overrightarrow{\textsc{cayley}}(\SymGroup{#1}, \{\sigma, \tau\})}
\newcommand{\TheCayleyGraph}[1]{\textsc{cayley}(\SymGroup{#1}, \{\sigma, \tau\})}
\newcommand{\CayleyG}[1]{\protect \overrightarrow{\textsc{cayley}}(\SymGroup{#1}, G)}
\newcommand{\SymGroup}[1]{\mathbb{S}_{#1}}
\newcommand{\SigmaEdges}{\mathcal{E}_{\sigma}}
\newcommand{\TauEdges}{\mathcal{E}_{\tau}}
\newcommand{\Wilf}[1]{\mathsf{Wilf}(#1)}
\newcommand{\Wheeel}[1]{\mathsf{Wheeel}(#1)}
\newcommand{\TheCycle}[1]{\mathcal{C}_{1}(#1)}
\newcommand{\TheCover}[1]{\mathcal{C}_{2}(#1)}
\newcommand{\ThePath}[1]{\mathcal{P}(#1)}
\newcommand{\tdots}{{\cdot}{\cdot}{\cdot}}
\newcommand{\sdots}{{...}}
\let\footnote=\endnote
\title[The Hamiltonicity of $\TheCayleyDigraph{{n}}$]%
{Hamiltonicity of the Cayley Digraph on the Symmetric Group Generated by $\sigma = (1 \, 2 \, \tdots \, n)$ and $\tau = (1 \, 2)$\vspace{-0.5em}}
\begin{document}

\begin{abstract}
The symmetric group is generated by $\sigma = (1 \; 2 \; \tdots \; n)$ and $\tau = (1 \; 2)$.
We answer an open problem of Nijenhuis and Wilf by constructing a Hamilton path in the directed Cayley graph for all $n$, and a Hamilton cycle for odd $n$.
\begin{center}
\emph{Dedicated to Herb Wilf (1931 -- 2012).}
\end{center}
\vspace{-1em}
\end{abstract}

\author{Aaron Williams\vspace{-0.5em}}
\thanks{Supported by NSERC Accelerator and Discovery grants, and an ONR basic research grant}

\maketitle
\vspace{-1em}

\section{Introduction} \label{sec:intro}

The Hamiltonicity of Cayley graphs is a well-studied area (see~Pak and Radoi\u{c}i\'{c}'s survey \cite{Pak}).
We consider the \emph{directed $\sigma$-$\tau$ graph} $\TheDigraph{n}=\TheCayleyDigraph{n}$ on the symmetric group $\SymGroup{n}$ with generators $\sigma \;{=}\; (1 \, 2 \, \tdots \, n)$ and $\tau \;{=}\; (1 \, 2)$, and edges $\SigmaEdges \cup \TauEdges$. 

\begin{figure}[h]
\begin{tabular}{@{}cc@{}}
\includegraphics[width=0.485\textwidth]{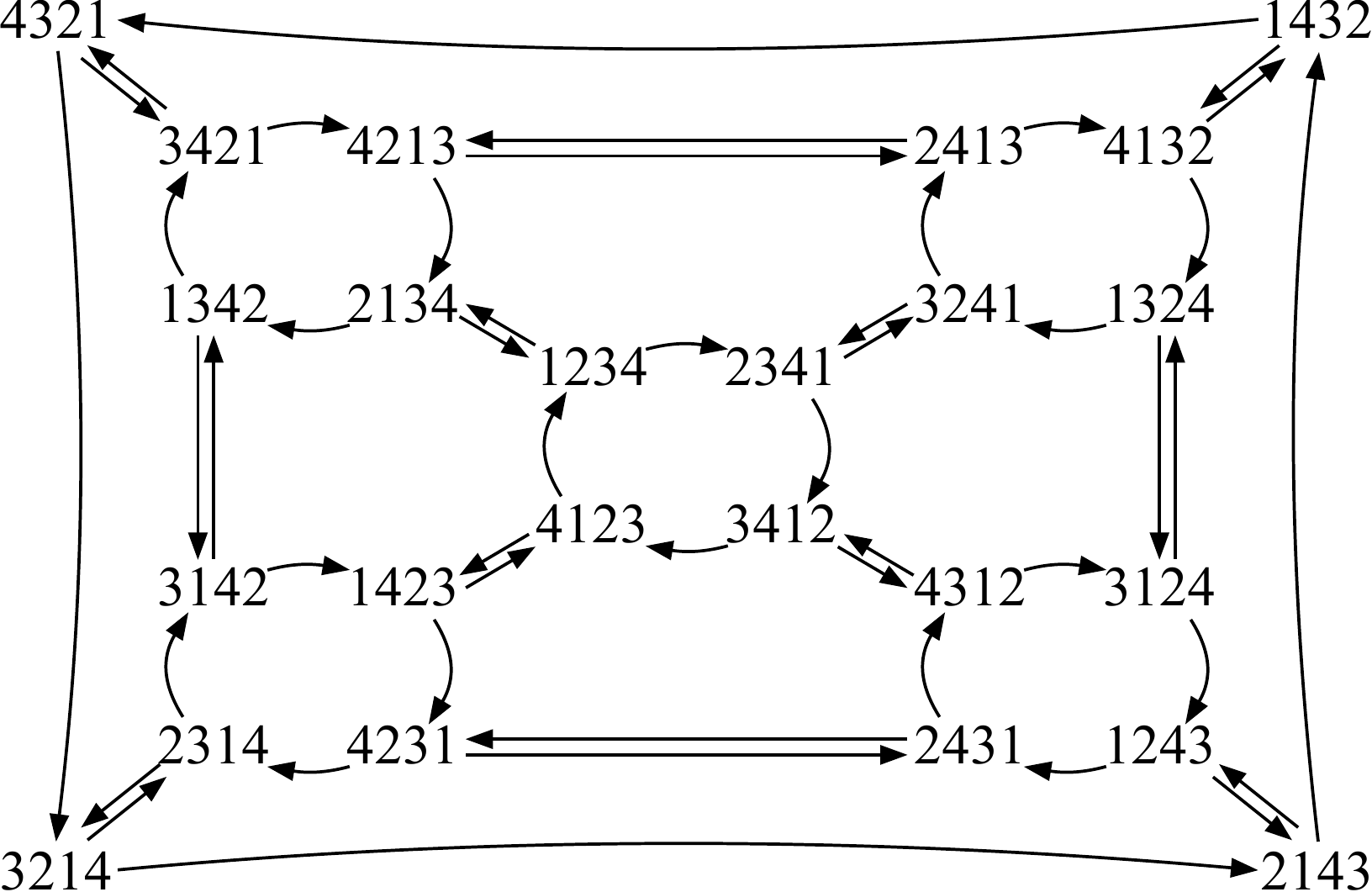} &
\includegraphics[width=0.485\textwidth]{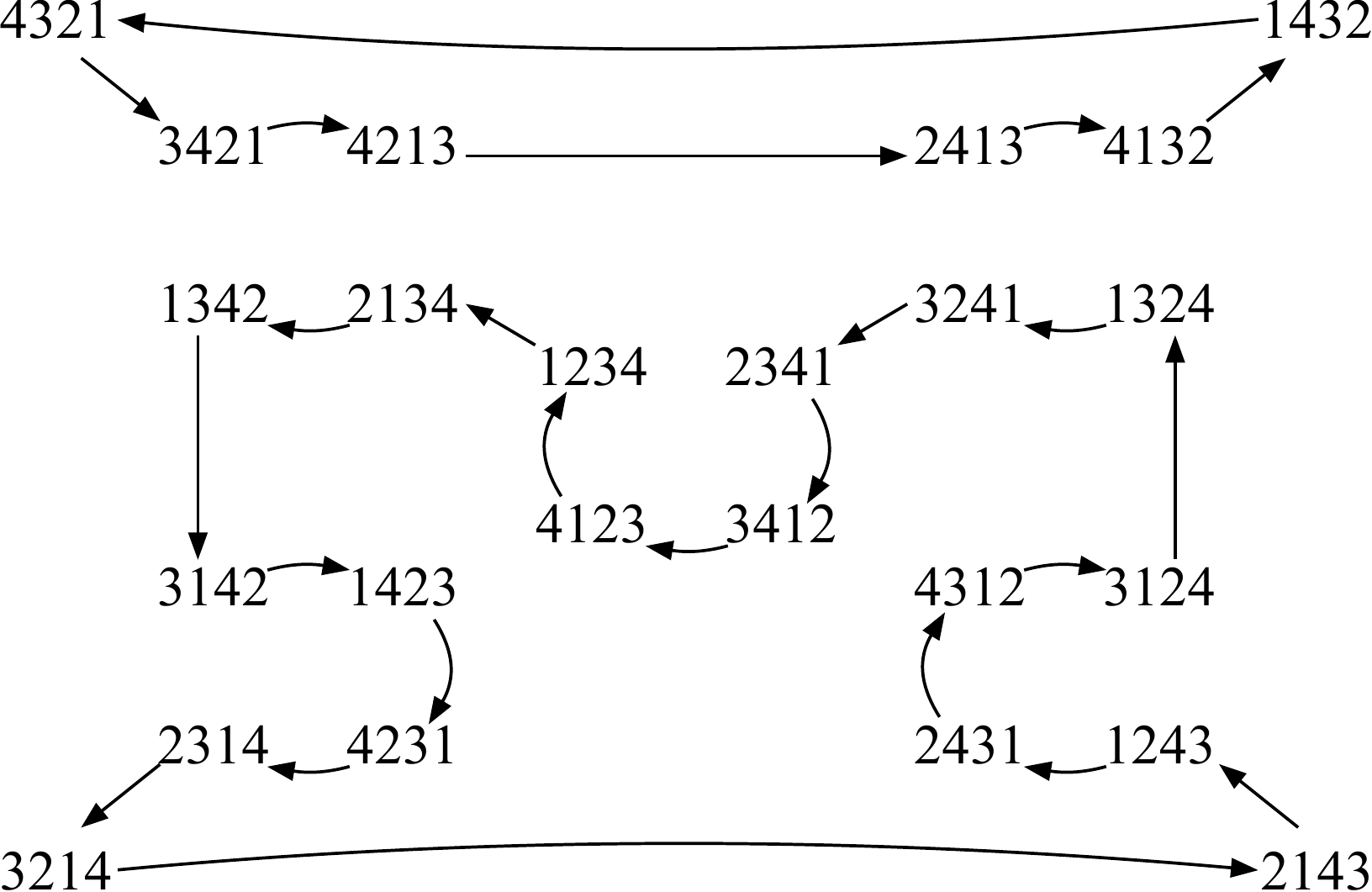} 
\vspace{-0.1in} \\ 
a) $\TheDigraph{4}$ & b) $\TheCover{4}$
\end{tabular}
\vspace{-0.1in}
\caption{a) $\TheDigraph{4}$ with curved and straight arcs for $\SigmaEdges$ and $\TauEdges$, and b) $\TheCover{4}$ is a (disjoint directed) cycle cover of size two in $\TheDigraph{4}$.
}
\label{fig:TheGraphTheCover4}
\end{figure}

The Hamiltonicity of $\TheDigraph{n}$ was first considered by Nijenhuis and Wilf (Exercise 6 in \cite{Wilf}).
A general condition by Rankin \cite{Rankin} forbids Hamilton cycles in $\TheDigraph{n}$ for even $n \geq 4$ (see Swan for a simplified proof \cite{Swan}).
Determining if Hamilton cycles exist for odd $n$ was given a difficulty rating of 48/50 by Knuth, making it one of the hardest in \emph{The Art of Computer Programming} (Problem 71 in Section 7.2.1.2~\cite{Knuth}).
We settle the Hamiltonicity problems by constructing $\TheCycle{n}$ and $\ThePath{n}$ such that:
\begin{enumerate}[noitemsep,nolistsep]
\item[1.] $\TheCycle{n}$ is a Hamilton cycle in $\TheDigraph{n}$ for odd $n$. 
\item[2.] $\ThePath{n}$ is a Hamilton path in $\TheDigraph{n}$ for all $n$.
\end{enumerate}

A \emph{disjoint directed cycle cover} (or simply \emph{cycle cover}) is a set of edges in which each vertex has in-degree one and out-degree one.
A cycle cover partitions into the edges of vertex-disjoint directed cycles that span the vertices, and its \emph{size} is the number of these cycles.
Our Hamilton cycle $\TheCycle{n}$ is a cycle cover of size one, and our Hamilton path $\ThePath{n}$ splits and joins the two cycles in the following:
\begin{enumerate}[noitemsep,nolistsep]
\item[3.] $\TheCover{n}$ is a cycle cover of size two in $\TheDigraph{n}$ for all $n$.
\end{enumerate}
Both $\TheCycle{n}$ and $\TheCover{n}$ are created by specifying the edge that enters each vertex $\mathbf{p}$.

\begin{definition} \label{def:TheCycle}
Let $\mathbf{p} \,{=}\, p_0 \tdots p_{n-1}$, $p_i \,{=}\, n$, and $r \,{=}\, p_{(i \bmod n{-}1){+}1}$. 
Then $(\mathbf{p} \tau, \mathbf{p}) \,{\in}\, \TheCycle{n}$ if (a) $r \,{<}\, n{-}1$ and $r \,{=}\, p_0{-}1$, or (b) $r\,{=}\,n{-}1$ and $p_0 \,{=}\, 2$, or (c) $p_1 p_2 \tdots p_{n-1}$ is a rotation of $1 \, 2 \, \tdots \, n{-}1$; 
otherwise, $(\mathbf{p} \sigma^{-1}, \mathbf{p}) \,{\in}\, \TheCycle{n}$.
\end{definition} 

\begin{definition} \label{def:TheCover}
Let $\mathbf{p} \,{=}\, p_0 \tdots p_{n-1}$, $p_i \,{=}\, n$, and $r \,{=}\, p_{(i \bmod n{-}1){+}1}$. 
Then $(\mathbf{p} \tau, \mathbf{p}) \,{\in}\, \TheCover{n}$ if (a) $r \,{<}\, n{-}1$ and $r \,{=}\, p_0{-}1$, or (b) $r\,{=}\,n{-}1$ and $p_0 \,{=}\, 1$; 
otherwise, $(\mathbf{p} \sigma^{-1}, \mathbf{p}) \,{\in}\, \TheCover{n}$.
\end{definition} 

Let us decipher Definition \ref{def:TheCover}.
The symbol $n$ has index $i$ in $\mathbf{p}$, and $r$ denotes the symbol to the right of $n$, except that $r = p_1$ when $i = n{-}1$.
For example, if $\mathbf{p} = p_0p_1p_2p_3 = 4231$, then $i=0$ and $r = 2$ (since $p_0=4$ is followed by $p_1 = 2$).
If $\mathbf{p} = 1234$, then $i=3$ and $r=2$ by the exceptional case.
A $\tau$-edge enters $\mathbf{p}$ when its first symbol is $p_0 = (r \bmod n{-}1){+}1$.
Thus, the vertices entered by a $\tau$-edge in $\TheCover{4}$ are 
$24\underline{1}3, 234\underline{1}, 2\underline{1}34$, and
$34\underline{2}1, 314\underline{2}, 3\underline{2}14$, and
$14\underline{3}2, 124\underline{3}, 1\underline{3}24$, where $r$ is underlined.
Figure \ref{fig:TheGraphTheCover4} b) gives $\TheCover{4}$ by adding $\sigma$-edges into the other vertices.

Although our rules are succinct, they do not lead directly to proofs of our results.
Instead we express $\TheCycle{n}$ and $\TheCover{n}$ as the symmetric difference of cycles of the form $\sigma^n$ and $(\tau \sigma^{-1})^{n-1}$.
Our main lemma computes cycle cover sizes in $\TheDigraph{n}$ using rotation systems, and we spend considerable effort simplifying these systems. In particular, our Hamilton cycle simplifies to a wheel structure on $n{-}1$ vertices.

The technique of creating large cycles from the symmetric difference of small cycles has been used by change ringers for hundreds of years (see Duckworth and Stedman~\cite{Tintinnalogia}).
Prior to this article, Hamilton cycles of the undirected $\TheCayleyGraph{n}$ were constructed with great difficultly by Compton and Williamson~\cite{Compton}. 
Hamilton cycles in $\CayleyG{n}$ have also been constructed for $G = \{\sigma, \tau, (1 \, 2 \, 3)\}$ by Stevens and Williams \cite{Stevens}, and $G = \{\sigma, (1 \, 2 \, \tdots \, n{-}1)\}$ by Holroyd, Ruskey, and Williams \cite{Holroyd}.
The literature frequently states that $\TheDigraph{5}$ is not Hamiltonian; see Ruskey, Jiang, and Weston \cite{Ruskey} for the history and resolution of this error.
Our local rules translate into efficient algorithms for generating permutations, and so this article has applications to combinatorial Gray codes (see Savage \cite{Savage} and Knuth~\cite{Knuth}). 

Sections \ref{sec:preliminaries}--\ref{sec:Hamilton} cover preliminaries, cycles in $\TheDigraph{n}$, rotation systems, and our theorems, respectively.
Table \ref{tab:summary} gives a handy summary of our notation on page~\pageref{tab:summary}.

\section{Preliminaries} \label{sec:preliminaries}

This section collects background concepts and definitions. 
In this article we often use subscripts and superscripts cyclically, so by convention $x_{k+1} = x_1$ when discussing a set $\{x_1, x_2, \sdots, x_k\}$, and $x^0 = x_k$ when discussing a sequence $x^1 \; x^2 \; \tdots \; x^k$.

\subsection{Strings} \label{sec:preliminaries_strings}

The set of permutations of $\nset{n} = \{1,2,\ldots,n\}$ written as strings is
\begin{equation*} 
\PERMS{n} = \big\{ p_1 \tdots p_n : \{p_1,\sdots,p_n\} = \nset{n}\big\}.
\end{equation*}
The strings that are \emph{missing} $m \in \nset{n}$, or any one symbol, are defined~as
\begin{equation*}
\MISSING[m]{n} = \big\{ p_1 \tdots p_{n-1} : \{p_1,\sdots,p_{n-1}\} = \nset{n} \backslash \{m\}\big\} 
\text{ and }
\MISSING{n} = \MISSING[1]{n} \cup \MISSING[2]{n} \cup \tdots \MISSING[n]{n}.
\end{equation*}
Thus, $\PERMS{3} \;{=}\; \{123,132,213,231,312,321\}$ contains the elements of $\SymGroup{3}$ in one-line notation, $\MISSING[1]{3} = \{23,32\}$ contains strings missing $1$, and $\MISSING{3} = \{12,13,21,23,31,32\}$ contains strings missing any one symbol.
We use bold letters for strings and subscripts for their individual symbols, as in $\mathbf{p} = p_1 p_2 \tdots p_n \in \PERMS{n}$.
We let $\mathbf{p} \sigma$ and $\mathbf{p} \tau$ denote the application of $\sigma$ and $\tau$ to the indices of $\mathbf{p} = p_1 p_2 \tdots p_n \in \PERMS{n}$.
Thus, $\mathbf{p} \sigma = p_2 p_3 \tdots p_n p_1$ since the first symbol is $p_2$, the second is $p_3$, and so on.
We apply multiple $\sigma$ and $\tau$ from left-to-right, and use exponentiation for inverses and repetition.
For example, $\mathbf{p} \sigma^{-1} = p_n p_1 p_2 \tdots p_{n-1}$ and $\mathbf{p} (\tau \sigma)^2 = \mathbf{p} \tau \sigma \tau \sigma = ((((\mathbf{p} \tau) \sigma) \tau ) \sigma)$.

\subsection{Rotational Equivalence} \label{sec:preliminaries_equiv}

Let $\sim$ denote the equivalence relation for strings under rotation.
We use uppercase and bold uppercase for equivalence classes and sets of equivalence classes, as in $X = [312] = \{312, 123, 213\}$, and $\mathbf{Y} = \{[12], [23]\}$.
Quotient sets are specified by $\quotient{}$.
Thus, $\quotient{\PERMS{3}} = \{[321], [312]\}$, $\quotient{\MISSING[1]{3}} = \{[32]\}$, and $\quotient{\MISSING{3}} = \{[31], [32], [21]\}$.
We say $X \in \quotient{\PERMS{n}}$ and $Y \in \quotient{\MISSING{n}}$ are \emph{consistent} if $X = [p_1 p_2 \tdots p_n]$ and $Y = [p_2 p_3 \tdots p_n]$.
In other words, $X$ and~$Y$ are consistent if they have the same circular order when ignoring the missing symbol.
For example, $X = [4123]$ and $Y = [134]$ are consistent (by $p_1p_2p_3p_4 = 2341$).

\subsection{Walks, Paths, and Cycles} \label{sec:preliminaries_walks}

Let $G=(V,E)$.
A \emph{walk} of \emph{length $k$} is a sequence $v_1 \ e_1 \ v_2 \ e_2 \ \sdots \ v_k \ e_k \ v_{k+1}$ where $e_i \in E$ is incident with $v_i, v_{i+1} \in V$ for $i \in \nset{k}$.
A \emph{path} is a walk with distinct edges and vertices.
A \emph{cycle} is a path except that $v_{k+1} = v_1$, and we omit $v_{k+1}$ from the sequence.
If $G$ is directed, then $e_i = (v_i, v_{i+1})$ is a \emph{forward edge} and $e_i = (v_{i+1}, v_i)$ is a \emph{backward edge} on the walk.
A path or cycle is \emph{directed} if it has no backward edges.
If $e_i = (v_i, \alpha v_i)$ for $\alpha \in \{\sigma,\sigma^{-1},\tau\}$, then we can replace $e_i$ by $\alpha$ in the sequence.
We specify walks and their edge sets~by 
\begin{gather*}
\walk{\mathbf{p} \alpha_1 \alpha_2 \tdots \alpha_k} = \mathbf{p} \ \alpha_1 \ (\mathbf{p}\alpha_1) \ \alpha_2 \ (\mathbf{p}\alpha_1\alpha_2) \ \alpha_3 \ \tdots \ \alpha_k \ (\mathbf{p} \alpha_1 \alpha_2 \tdots \alpha_k) \text{ and } \\
\edges{\mathbf{p} \alpha_1 \alpha_2 \tdots \alpha_k} = \{
(\mathbf{p}, \mathbf{p}\alpha_1), \ 
(\mathbf{p}\alpha_1, \mathbf{p}\alpha_1\alpha_2), \ 
\sdots, \ 
(\mathbf{p} \alpha_1 \alpha_2 \tdots \alpha_{k-1}, \mathbf{p} \alpha_1 \alpha_2 \tdots \alpha_k)\}.
\end{gather*}
For example, one of the cycles in Figure \ref{fig:TheGraphTheCover4} b) is $\walk{\mathbf{p} (\tau \sigma)^3}$ for $\mathbf{p} = 4321$.

\section{The Directed $\sigma$-$\tau$ Graph and its Cycles} \label{sec:structure}

This section shows how cycle covers can be expressed as the symmetric difference of basic cycles.
Figure \ref{fig:Cycles4} illustrates our reformulation of $\TheCover{4}$ from Figure~\ref{fig:TheGraphTheCover4}~b).

\subsection{Basic Cycles} \label{sec:structure_basic}

We first show that walks associated with $\sigma^n$ and $(\tau \sigma^{-1})^{n-1}$ are (directed) cycles, which we refer to as \emph{$\sigma$-cycles} and \emph{alternating-cycles}, respectively.

\begin{lemma} \label{lem:cycles}
If $\mathbf{p} = p_1 \tdots p_n \in \PERMS{n}$, then $\walk{\mathbf{p} \sigma^n}$ and $\walk{\mathbf{p} (\tau \sigma^{-1})^{n-1}}$ are cycles. 
\end{lemma}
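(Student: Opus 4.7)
The plan is to verify, for each of the two walks, (i) that it closes up, so that the final vertex equals the starting vertex $\mathbf{p}$, and (ii) that no intermediate vertex is repeated, so that the closed walk qualifies as a directed cycle under the definition in Section~\ref{sec:preliminaries_walks}.

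For $\walk{\mathbf{p}\sigma^n}$ the first claim is immediate: $\sigma$ cycles the $n$ index positions, so $\sigma^n$ acts as the identity and the walk closes after $n$ edges. The intermediate vertices are the $n$ cyclic rotations $\mathbf{p}, \mathbf{p}\sigma, \ldots, \mathbf{p}\sigma^{n-1}$ of $\mathbf{p}$; because $\mathbf{p} \in \PERMS{n}$ has $n$ distinct symbols, no nontrivial rotation of $\mathbf{p}$ can equal $\mathbf{p}$, so these $n$ vertices are pairwise distinct.

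For $\walk{\mathbf{p}(\tau\sigma^{-1})^{n-1}}$, set $\rho = \tau\sigma^{-1}$. A direct computation gives $\mathbf{p}\rho = p_n\, p_2\, p_1\, p_3\, p_4 \tdots p_{n-1}$, so as a permutation of positions, $\rho$ sends $1 \mapsto 3$, fixes $2$, sends $i \mapsto i{+}1$ for $3 \le i \le n{-}1$, and sends $n \mapsto 1$. In cycle notation this is the $(n{-}1)$-cycle $(1 \; 3 \; 4 \; \tdots \; n)$ with the fixed point $2$, which has order $n-1$. Hence $\rho^{n-1}=e$, proving the walk closes after its $2(n-1)$ edges.

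For distinctness, I would partition the $2(n-1)$ intermediate vertices into an \emph{even family} $\mathbf{p}\rho^k$ and an \emph{odd family} $\mathbf{p}\rho^k\tau$, for $k = 0, 1, \ldots, n-2$. Within either family, an equality of two members would force $\rho^j = e$ for some $0 < j < n{-}1$, contradicting the order $n{-}1$ of $\rho$. To separate the two families, observe that $\rho$ fixes position $2$, so the symbol $p_2$ sits at position $2$ in every even-family vertex, while $\tau$ moves it to position $1$ in every odd-family vertex; since $p_1 \neq p_2$, no cross-family collision is possible. This accounts for all $2(n-1)$ vertices and finishes the proof. The only genuinely delicate step is computing $\rho$ as a position permutation; once that is in hand, both closure and distinctness reduce to the elementary fact that a $k$-cycle has order $k$, so I do not anticipate any deeper obstacle.
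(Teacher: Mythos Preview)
Your proposal is correct and follows essentially the same approach as the paper: both arguments amount to a direct computation of the vertices on each walk, observing that $\sigma$ has order $n$ and that $\rho=\tau\sigma^{-1}$ fixes position~$2$ while cycling the remaining $n-1$ positions. The paper presents this as an explicit table of the $2(n-1)$ strings (noting that the even-family vertices are the rotations of $p_1 p_3 p_4 \cdots p_n$ with $p_2$ reinserted), whereas you phrase the same observation algebraically via the cycle structure of $\rho$; one small wording point is that your clause ``since $p_1\neq p_2$'' should really read ``since position~$2$ of any even-family vertex holds $p_2$ while position~$2$ of any odd-family vertex does not,'' but the intended argument is clear and sound.
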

\begin{proof}
The sequences give cycles on multiple lines below left and right, respectively.
\begin{equation*}
\begin{array}{@{}c@{\,}c@{\,}c@{\,}c@{\,}c@{\,}c@{ \ \ }c}
p_1 & p_2 & p_3 & p_4 & \tdots & p_n & \sigma \\
p_2 & p_3 & p_4 & \tdots & p_n & p_1 & \sigma \\
\multicolumn{5}{c}{\cdots} & & \cdots \\
p_n & p_1 & p_2 & p_3 & \tdots & p_{n-1} & \sigma
\end{array}
\ \vline \ 
\begin{array}{*{2}{c@{\,}c@{\,}c@{\,}c@{\,}c@{\,}c@{\,}c@{ \ \ }c}@{}}
p_1 & p_2 & p_3 & p_4 & p_5 & \tdots & p_n & \tau & p_2 & p_1 & p_3 & p_4 & p_5 & \tdots & p_n & \sigma^{-1} \\
p_n & p_2 & p_1 & p_3 & p_4 & \tdots & p_{n-1} & \tau & p_2 & p_n & p_1 & p_3 & p_4 & \tdots & p_{n-1} & \sigma^{-1} \\ 
\multicolumn{7}{c}{\ldots} & \ldots & \multicolumn{7}{c}{\ldots} & \ldots \\
p_3 & p_2 & p_4 & p_5 & \tdots & p_n & p_1 & \tau & p_2 & p_3 & p_4 & p_5 & \tdots & p_n & p_1 & \sigma^{-1}
\end{array}
\end{equation*}
Notice that $\walk{\mathbf{p} \sigma^{n}}$ visits $[\mathbf{p}]$, while $\walk{\mathbf{p} (\tau \sigma^{-1})^{n-1}}$ visits $[p_1 p_3 p_4 \tdots p_n]$ with each $\tau$-edge moving the missing symbol $p_2$ from the second to first position.
\end{proof}

Let $\SigmaCycles{n} \subseteq 2^{\SigmaEdges}$ and $\AltCycles{n} \subseteq 2^{\SigmaEdges \cup \TauEdges}$ contain the edge sets of each $\sigma$-cycle and alternating-cycle in $\TheDigraph{n}$, respectively.
We form bijections between these sets and our equivalence classes based on the following equalities,
\begin{equation*}
|\SigmaCycles{n}| = n! / n = (n{-}1)! = |\quotient{\PERMS{n}}| \text{ and } 
|\AltCycles{n}| = 2n! / (2n{-}2) = n(n{-}2)! = |\quotient{\MISSING{n}}|
\end{equation*}
which are due to each vertex being on one $\sigma$-cycle (of length $n$) and two alternating-cycles (of length $2n{-}2$)
We define our one-to-one maps $s : \quotient{\PERMS{n}} \rightarrow \SigmaCycles{n}$ and $a : \quotient{\MISSING{n}} \rightarrow \AltCycles{n}$ below, and we note that they respect $\sim$ by the proof of Lemma~\ref{lem:cycles}
\begin{align*}
s([\mathbf{p}]) &= \edges{\mathbf{p} \sigma^n} \text{ for } \mathbf{p} \in \PERMS{n} \\
a([\mathbf{q}]) &= \edges{\mathbf{p} (\tau \sigma^{-1})^{n-1}} \text{ for } \mathbf{q} = q_1 q_2 \tdots q_{n-1} \in \MISSING[m]{n} \text{ and } \mathbf{p} = q_1 m q_2 q_3 \tdots q_{n-1} \in \PERMS{n}.
\end{align*}
For example, $s([4123]) = \edges{4123 \, \sigma^4}$ and $a([413]) = \edges{4213 \, (\tau \sigma^{-1})^{3}}$.

\subsection{Cycle Covers} \label{sec:structure_covers}

Now we express cycle covers as symmetric differences. 

\begin{lemma} \label{lem:xor} 
$D$ is a cycle cover in $\TheDigraph{n}$ if and only if $D=E_\sigma \xor (A_1 \cup A_2 \cup \tdots \cup A_h)$ for a subset of alternating-cycle edge sets $\{A_1, A_2, \sdots, A_h\} \subseteq \AltCycles{n}$.
\end{lemma}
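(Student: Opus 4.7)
The plan is to argue both implications by a vertex-by-vertex local analysis, relying on two structural facts that follow from Lemma~\ref{lem:cycles}. First, by the double-count $|\AltCycles{n}|(n{-}1) = n(n{-}2)!(n{-}1) = n!$, each $\sigma$-arc and each $\tau$-arc lies in exactly one alternating-cycle. Second, because the walk $\walk{\mathbf{p}(\tau \sigma^{-1})^{n-1}}$ alternates forward $\tau$-steps with backward $\sigma$-steps, every vertex on such a cycle is either a \emph{source} (the outgoing $\sigma$- and $\tau$-arcs both belong to the cycle) or a \emph{sink} (the incoming $\sigma$- and $\tau$-arcs both do). Combined with the fact that every vertex lies on exactly two alternating-cycles, this forces each vertex to be a source in one and a sink in the other.

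For the $(\Leftarrow)$ direction I would fix a vertex $\mathbf{v}$ and tally the in- and out-degrees of $D = E_\sigma \xor (A_1 \cup \cdots \cup A_h)$. Since the $A_i$'s are pairwise edge-disjoint and $\mathbf{v}$ owns only one incoming and one outgoing $\sigma$-arc, $\mathbf{v}$ belongs to at most two of the $A_i$'s, and if to two then it is a source in one and a sink in the other. In each of the four cases (on none, on one as source, on one as sink, or on two) the XOR swaps whichever $\sigma$-arc at $\mathbf{v}$ appears in some $A_i$ for the companion $\tau$-arc at the same source or sink, leaving both in-degree and out-degree exactly one at $\mathbf{v}$, so $D$ is a cycle cover.

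For the $(\Rightarrow)$ direction I would set $X = D \xor E_\sigma$, which consists of the $\tau$-arcs used by $D$ together with the $\sigma$-arcs not used by $D$. Let $\mathcal{A} \subseteq \AltCycles{n}$ collect the alternating-cycles containing at least one $\tau$-arc of $D$; the goal is $X = \bigcup_{A \in \mathcal{A}} A$, which immediately gives $D = E_\sigma \xor \bigcup_{A \in \mathcal{A}} A$. The containment $\subseteq$ is easy from the first structural fact: every $\tau$-arc of $X$ sits in its unique alternating-cycle (which is in $\mathcal{A}$ by definition), and every $\sigma$-arc of $X$ is the outgoing arc at a vertex $\mathbf{v}$ whose out-degree-one property in $D$ forces the outgoing $\tau$-arc into $D$, placing $\mathbf{v}$'s source-cycle into $\mathcal{A}$ and that $\sigma$-arc inside it. The main obstacle is the reverse containment $\supseteq$: starting from a seed $\tau$-arc $(\mathbf{u}, \mathbf{u}\tau) \in A \cap D$ for some $A \in \mathcal{A}$, I would walk around $A$ and propagate --- at the sink $\mathbf{u}\tau$ the in-degree-one property of $D$ forces the incoming $\sigma$-arc of $A$ out of $D$, hence into $X$; at the next source the absent outgoing $\sigma$-arc forces the outgoing $\tau$-arc into $D$ and so into $X$; iterating around $A$ until it closes shows every arc of $A$ lies in $X$.
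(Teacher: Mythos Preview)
Your proposal is correct and follows essentially the same approach as the paper. Both directions hinge on the same structural observation---that each vertex on an alternating-cycle is either a source (both outgoing arcs in the cycle) or a sink (both incoming arcs)---and the $(\Rightarrow)$ direction is in both cases the propagation argument walking around an alternating-cycle using the degree-one constraints of $D$; your write-up simply organizes the bookkeeping more explicitly via $X = D \xor E_\sigma$ and the collection $\mathcal{A}$, whereas the paper condenses this into two sentences.
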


\begin{proof}
Suppose $D = E_\sigma \xor (A_1 \cup A_2 \cup \tdots \cup A_h)$.
Note that $E_\sigma$ is a cycle cover. 
Distinct alternating-cycles are edge-disjoint, and vertex $\mathbf{p}$ on an alternating-cycle~$A$ either has one $\sigma$-edge and one $\tau$-edge entering it on $A$, or one $\sigma$-edge and one $\tau$-edge exiting it in $A$.
Thus, $\mathbf{p}$ has in-degree and out-degree one in $D$, so $D$ is a cycle~cover.

Suppose $D$ is a cycle cover.
If the $\tau$-edge $(\mathbf{p}, \mathbf{p} \tau)$ is in $D$, then the $\sigma$-edge $(\mathbf{p} \tau \sigma^{-1}, \mathbf{p} \tau)$ is not in $D$, and hence the $\tau$-edge $(\mathbf{p} \tau \sigma^{-1}, \mathbf{p} \tau \sigma^{-1} \tau)$ is in $D$, and so on.
Thus, $D$ contains every $\tau$-edge and none of the $\sigma$-edges on an alternating-cycle.
Since each cycle in $D$ either contains a $\tau$-edge or is a $\sigma$-cycle, we can conclude that $D = E_\sigma \xor (A_1 \cup A_2 \cup \tdots \cup A_h)$ for some $\{A_1, A_2, \sdots, A_h\} \subseteq \AltCycles{n}$.
\end{proof}

Now we apply Lemma \ref{lem:xor} to $\TheCycle{n}$ and $\TheCover{n}$.
To describe the alternating-cycles, let $\FixSubset[n]{r}{m} = \{[n r p_3 \tdots p_{n-1}] \in \quotient{\MISSING[m]{n}}\}$ contain equivalence classes with two \emph{fixed symbols}: $r$ is right of $n$, and $m$ is missing.
For example, $\FixSubset[5]{1}{2} = \{[5134],[5143]\}$. 
We let $\FixSubset{r}{m}=\FixSubset[n]{r}{m}$ when context allows, and combine these subsets as follows
\begin{align}
\YCycle{n} &= \FixSubset{1}{2} \cup \FixSubset{2}{3} \cup \tdots \cup \FixSubset{n{-}2}{n{-}1} \cup \FixSubset{n{-}1}{2} \cup \{[1 \, 2 \, \tdots \, n{-}1]\} \text{ and} \label{eq:YCycle} \\
\YCover{n} &= \FixSubset{1}{2} \cup \FixSubset{2}{3} \cup \tdots \cup \FixSubset{n{-}2}{n{-}1} \cup \FixSubset{n{-}1}{1}.  \label{eq:YCover} 
\end{align}
For example, the two sets are below for $n=5$ along with their fixed symbol subsets 
\begin{align*}
\YCycle{5} 
&= \{\overset{\FixSubset{1}{2}}{[5143],[5134]},\overset{\FixSubset{2}{3}}{[5241],[5214]},\overset{\FixSubset{3}{4}}{[5321],[5312]},\overset{\FixSubset{4}{2}}{[5431],[5413]},[1234]\} \\
\YCover{5} 
&= \{\underset{\FixSubset{1}{2}}{[5143],[5134]},\underset{\FixSubset{2}{3}}{[5241],[5214]},\underset{\FixSubset{3}{4}}{[5321],[5312]},\underset{\FixSubset{4}{1}}{[5432],[5423]}\}.
\end{align*}
Let $\ACycle{n}$ and $\ACover{n}$ be the respective unions of $a(Y)$ for $Y \in \YCycle{n}$ and $Y \in \YCover{n}$.
For example, $\ACover{4} = a([413]) \cup a([421]) \cup a([432])$ appears in Figure \ref{fig:Cycles4} b).

\begin{lemma} \label{lem:rules}
(i) $\TheCycle{n} = \SigmaEdges \xor \ACycle{n}$ and (ii) $\TheCover{n} = \SigmaEdges \xor \ACover{n}$.
\end{lemma}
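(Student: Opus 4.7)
The plan is to use Lemma~\ref{lem:xor} to reduce both (i) and (ii) to a vertex-by-vertex check. The symmetric difference $\SigmaEdges \xor (A_1 \cup \cdots \cup A_h)$ is a cycle cover in which every vertex has a $\sigma$-edge entering, except at those vertices where two edges of some $A_j$ both enter; at such vertices the entering $\sigma$-edge is cancelled and a $\tau$-edge is substituted. I will call these the \emph{$\tau$-sinks} of the chosen union. So the proof reduces to showing that the $\tau$-sinks of $\ACycle{n}$ (resp.~$\ACover{n}$) coincide exactly with the vertices at which Definition~\ref{def:TheCycle} (resp.~Definition~\ref{def:TheCover}) prescribes a $\tau$-edge.

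The first step is to pin down the $\tau$-sinks on a single alternating-cycle. Re-examining the walk displayed in the proof of Lemma~\ref{lem:cycles}, the vertices reached immediately after each $\tau$-step are precisely those at which two alternating-cycle edges enter. For $a([\mathbf{q}])$ with $\mathbf{q} \in \MISSING[m]{n}$, these sinks are exactly the strings $m\alpha$, where $\alpha$ ranges over the rotations of $\mathbf{q}$. Consequently every vertex $\mathbf{p} = p_0 p_1 \cdots p_{n-1}$ is a $\tau$-sink on exactly one alternating-cycle, namely the one indexed by $[p_1 p_2 \cdots p_{n-1}] \in \quotient{\MISSING[p_0]{n}}$. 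Hence $\mathbf{p}$ is a $\tau$-sink of $\ACycle{n}$ if and only if $[p_1 \cdots p_{n-1}] \in \YCycle{n}$, and analogously for $\ACover{n}$ and $\YCover{n}$.

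Second, I will translate these membership conditions into the arithmetic of Definitions~\ref{def:TheCycle} and~\ref{def:TheCover}. The key observation is that $[p_1 \cdots p_{n-1}] \in \FixSubset{r}{m}$ is equivalent to asserting that $p_0 = m$ and that $r$ is the symbol cyclically following $n$ in $p_1 \cdots p_{n-1}$ --- and this cyclic successor is exactly the $r$ produced by the formula $r = p_{(i \bmod n-1) + 1}$ in the Definitions, since the wrap-around case $i = n-1$ correctly returns $p_1$. With this identification, $\bigcup_{r=1}^{n-2} \FixSubset{r}{r+1}$ translates symbol-for-symbol into case~(a) of both Definitions; $\FixSubset{n-1}{1}$ gives case~(b) of Definition~\ref{def:TheCover}; $\FixSubset{n-1}{2}$ gives case~(b) of Definition~\ref{def:TheCycle}; and the singleton class $\{[1\,2\,\tdots\,n{-}1]\}$ handles case~(c), which accounts for the unique alternating-cycle on which vertices with $p_0 = n$ can appear as sinks.

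The main obstacle, though minor, will be the boundary case $p_0 = n$, where $n$ does not occur in $p_1 \cdots p_{n-1}$ at all. For Definition~\ref{def:TheCover}, no class of $\YCover{n}$ misses the symbol $n$, so I must verify arithmetically that conditions~(a) and~(b) cannot fire when $p_0 = n$. For Definition~\ref{def:TheCycle}, only the singleton $[1\,2\,\tdots\,n{-}1]$ misses $n$, and only case~(c) fires when $p_0 = n$; aligning the two is where the ``rotation of $1\,2\,\tdots\,n{-}1$'' phrasing earns its keep. Once these edge cases are dispatched, the match between the cases of each Definition and the components of $\YCycle{n}$ or $\YCover{n}$ is a direct reading-off, and Lemma~\ref{lem:xor} delivers both equalities.
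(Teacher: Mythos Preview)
Your proposal is correct and follows essentially the same approach as the paper: for each vertex $\mathbf{p}=p_0p_1\cdots p_{n-1}$, determine whether the incoming edge in the symmetric difference is $\tau$ or $\sigma$ by checking whether $[p_1\cdots p_{n-1}]$ lies in $\YCycle{n}$ (resp.\ $\YCover{n}$), and match this membership to the clauses of Definitions~\ref{def:TheCycle} and~\ref{def:TheCover}. The paper's proof is terser and does not spell out the boundary case $p_0=n$ or the role of the singleton class $[1\,2\,\cdots\,n{-}1]$, both of which you treat explicitly and correctly.
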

\begin{proof}
Consider $\mathbf{p} = p_0 \tdots p_{n-1}$ with respect to Definition \ref{def:TheCover}.
Notice that 
\begin{center}
(a) $r < n{-}1$ and $r = p_0{-}1$, or (b) $r = n{-}1$ and $p_0 = 1$ $\iff$ $\FixSubset{r}{p_0} \subseteq \YCover{n}$.
\end{center}
Thus, $(\mathbf{p} \tau, \mathbf{p}) \in \TheCover{n}$ if and only if $(\mathbf{p} \tau, \mathbf{p}) \in \ACover{n}$, and $(\mathbf{p} \sigma^{-1}, \mathbf{p}) \in \TheCover{n}$ if and only if $(\mathbf{p} \sigma^{-1}, \mathbf{p}) \notin \ACover{n}$.
This proves (ii) since $\SigmaEdges \xor \ACover{n}$ contains every $\tau$-edge in $\ACover{n}$ and every $\sigma$-edge not in $\ACover{n}$.
A similar argument proves (i).
\end{proof}

\begin{figure}[h]
\begin{tabular}{@{}cc@{}}
\includegraphics[width=0.475\textwidth]{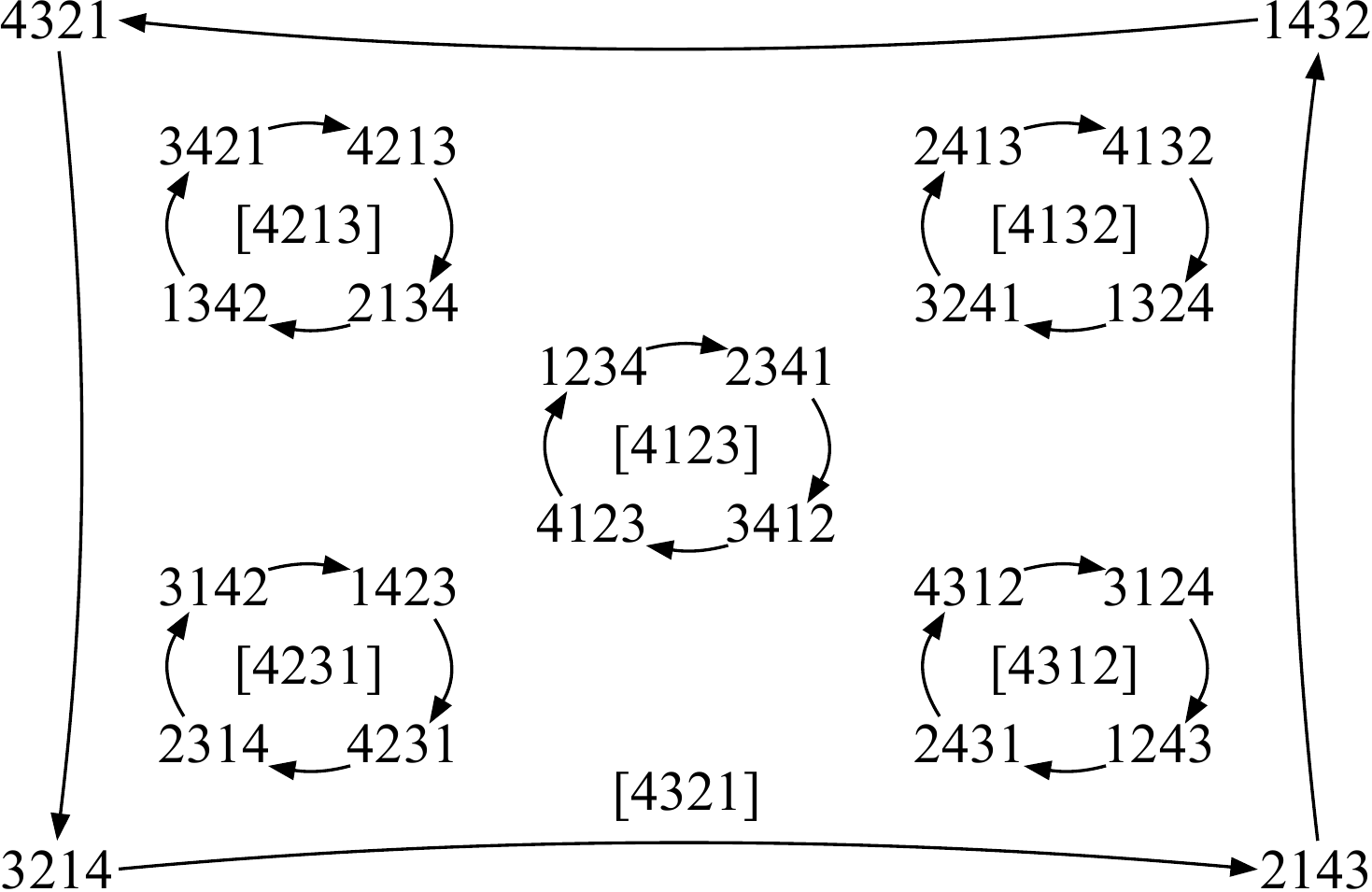} & 
\includegraphics[width=0.475\textwidth]{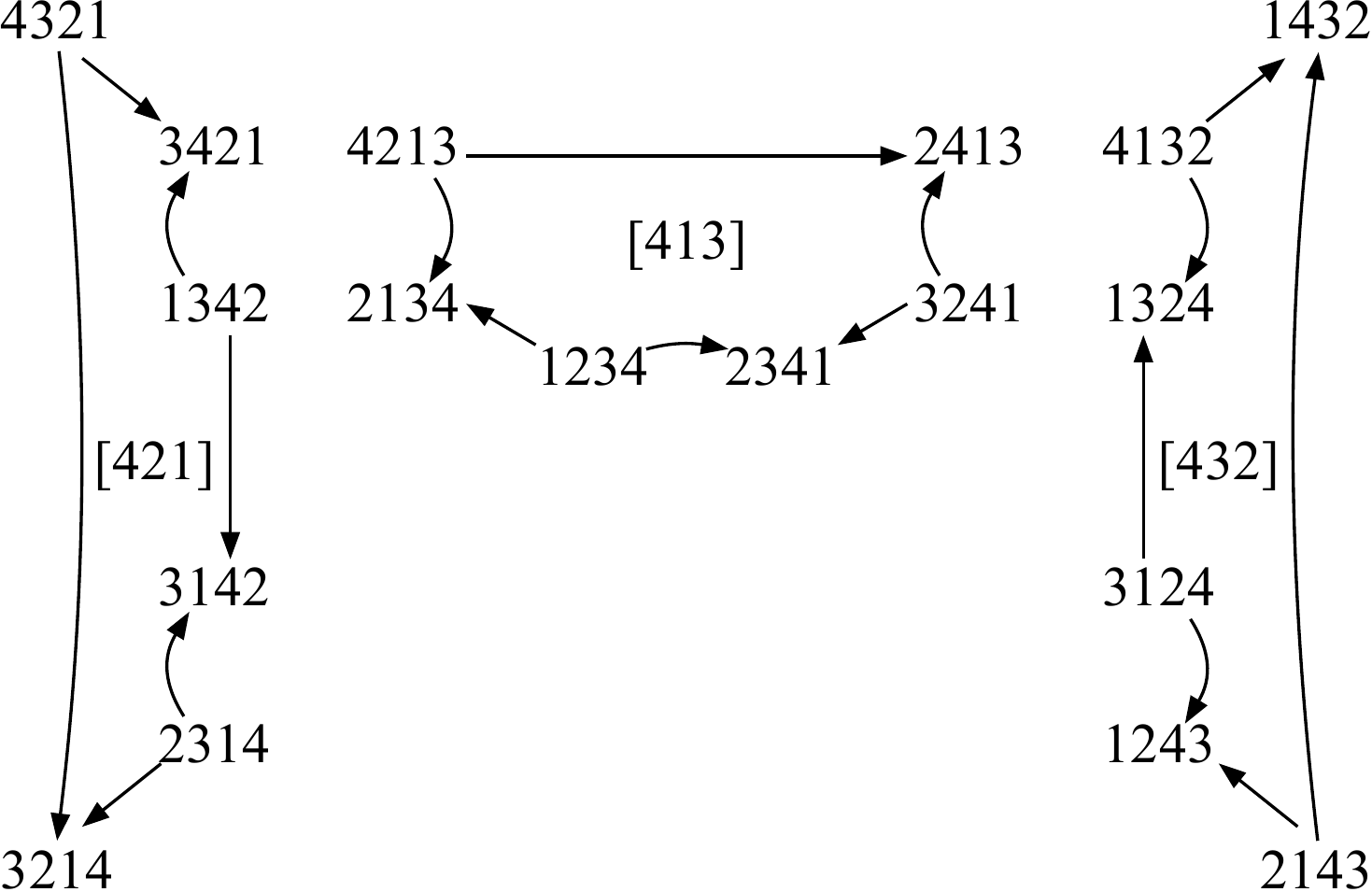}
\vspace{-0.125in}
\\
a) $\SigmaEdges$ & b) $\ACover{4}$
\end{tabular}
\vspace{-0.125in}
\caption{
The symmetric difference of a) and b) is the cycle cover $\TheCover{4}$. 
}
\label{fig:Cycles4}
\end{figure}

\section{Rotation Systems} \label{sec:rotation}

This section discusses rotation systems in general, and two systems in particular. 
We name $\Wheeel{n}$ after the wheel graph, and show that it has one or two faces depending on $n$'s parity.
We name $\Wilf{n}$ after Wilf \cite{Wilf}, and use it to compute cycle cover sizes. 

\subsection{Definitions} \label{sec:rotation_basic}

A \emph{rotation system} $R=(V,E,\theta)$ (or \emph{combinatorial embedding})  is an undirected graph $G=(V,E)$ that allows loops and parallel edges, and a cyclic order $\theta(v)$ on the edges incident with each $v \in V$.
A \emph{face} is a cycle $v_1 \ e_1 \ v_2 \ e_2 \ \tdots \ v_k \ e_k$ in which each $e_{i-1}$ is immediately followed by $e_{i}$ in $\theta(v_i)$. 
Each edge $(u,v) \in E$ is comprised of two \emph{darts}, $\dart{u}{v}$ and $\dart{v}{u}$, and each dart belongs to one face.
Rotation systems are \emph{unifacial} (also known as \emph{unicellular}) or \emph{bifacial} if they have one or two faces,~respectively.
To simplify our figures and formulae, we use \CW\ and \CCW\ for vertices that are embedded with clockwise and counter-clockwise edge orders, respectively, and in writing we specify $\theta(v)$ with any linear order that induces it.

\subsection{Spinning `Wheeel'} \label{sec:rotation_Wheeel}

An \emph{$m$-wheel} is an undirected graph with vertex set $\{h, r_1, \sdots, r_{m-1}\}$ and edges $R^j = (r_{j}, r_{j+1})$ and $S^j = (h, r_j)$ for $j \in \nset{m}$. 
The cycle $r_1 \ R^1 \ r_2 \ R^2 \ r_3 \ \tdots \ r_{m-1} \ R^{m-1}$ is the \emph{rim}, $h$ is the \emph{hub}, and the $S^j$ edges are the \emph{spokes}.
An \emph{$m$-wheeel} (with an extra `\emph{e}') is an $m$-wheel plus an edge $P^1 = (h, r_{1})$ parallel to $S^1$.
The \emph{spinning $m$-wheeel} is a rotation system $\Wheeel{m} = (V,E,\theta)$ where $(V,E)$ is an $m$-wheeel and $\theta$ orders the edges as follows for $2 \leq i \leq n{-}1$
\begin{equation*}
\theta(r_1) = S^1, R^1, P^1, R^{m-1} \ \ \ 
\theta(r_i) = S^i, R^i, R^{i-1} \ \ \ 
\theta(h) = S^1, S^2, \sdots, S^{m-2}, P^1, S^{m-1}.
\end{equation*}
Figure \ref{fig:Wheeel6} illustrates $\Wheeel{7}$ and $\Wheeel{6}$ and the proof of Lemma \ref{lem:Wheeel}.

\begin{figure}[h]
\begin{tabular}{@{}c@{}c@{\;}c@{\;\;\;}c@{}}
\includegraphics[width=0.24\textwidth]{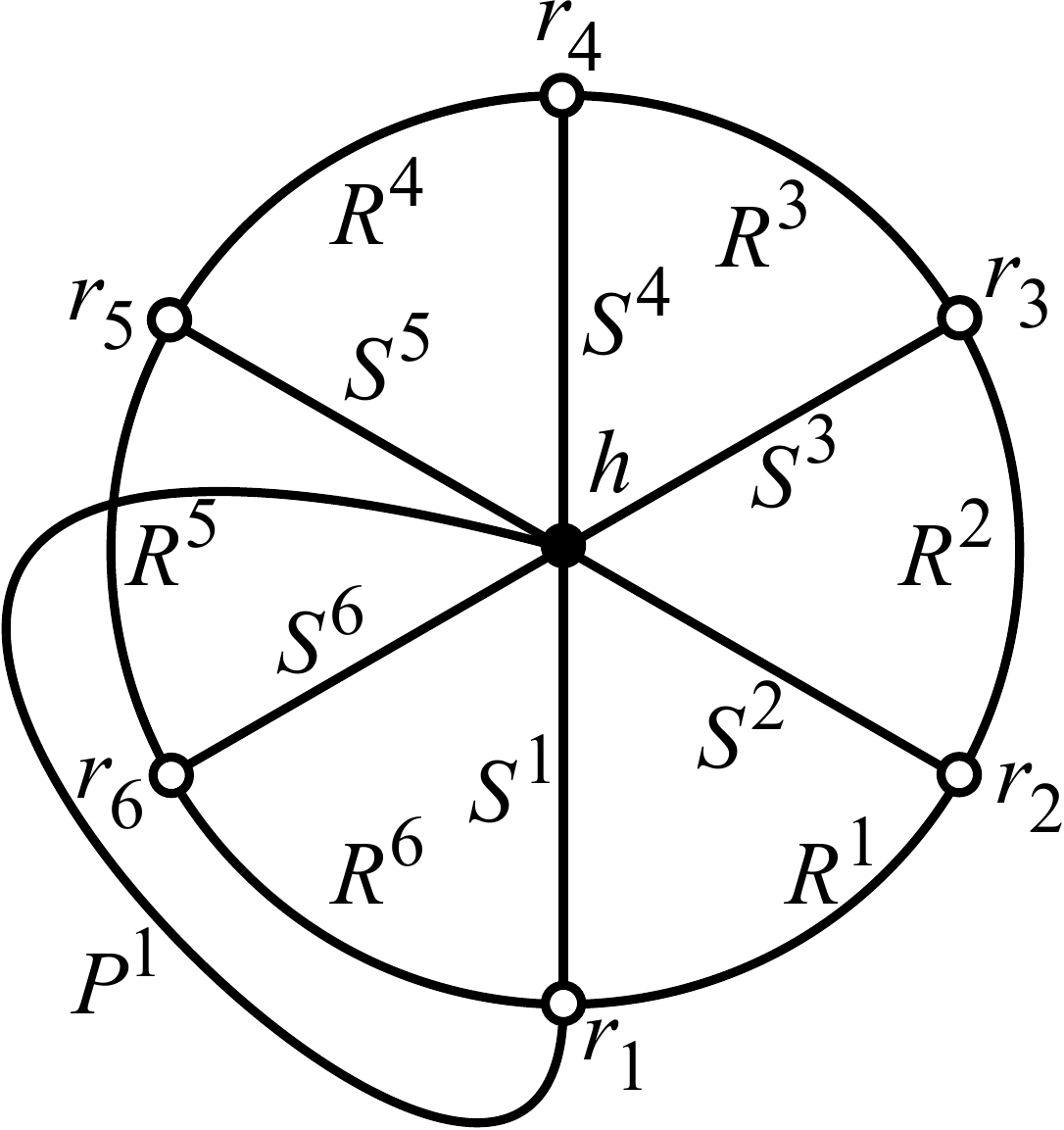} &
\includegraphics[width=0.24\textwidth]{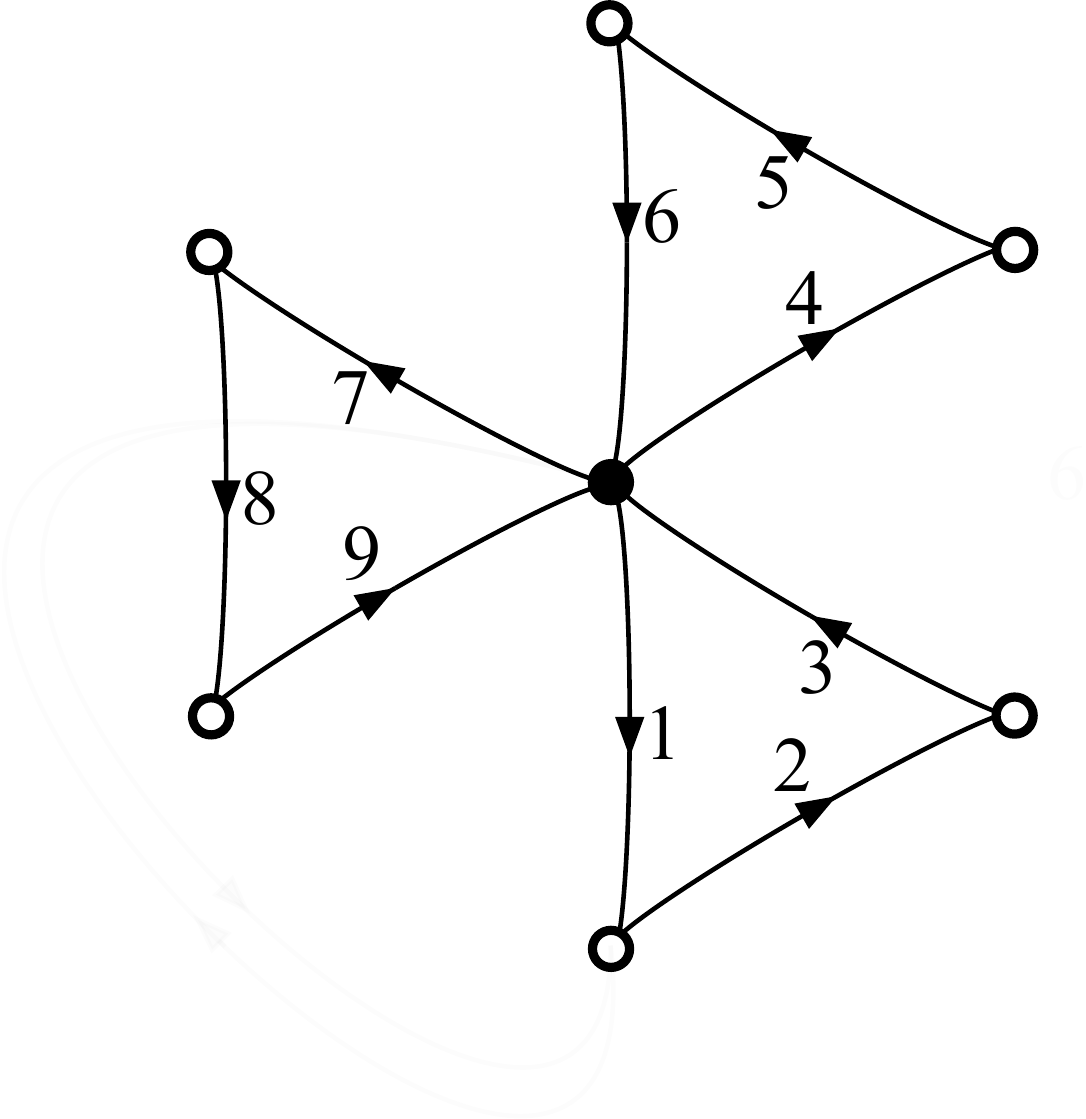} &
\includegraphics[width=0.24\textwidth]{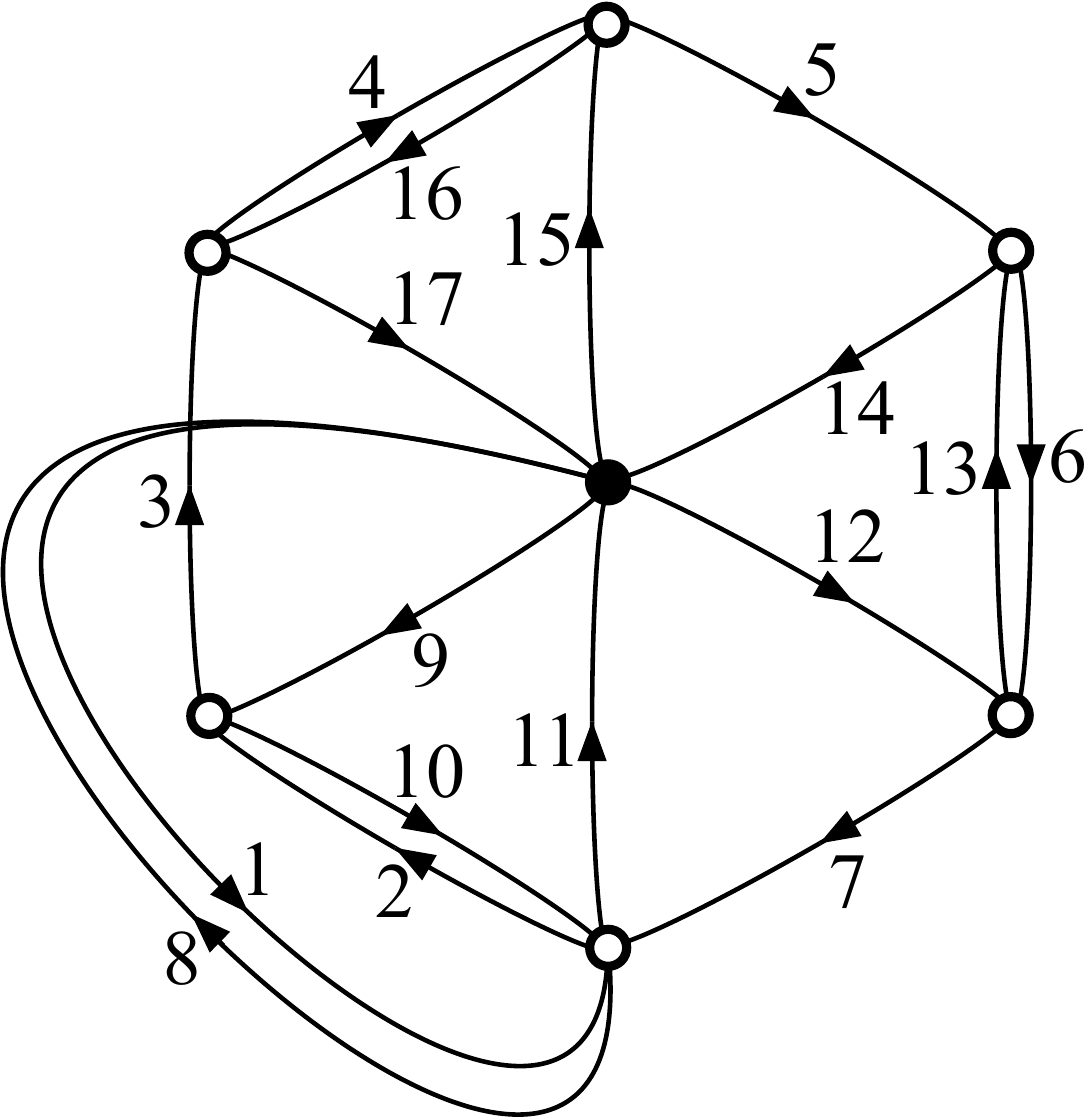} &
\includegraphics[width=0.24\textwidth]{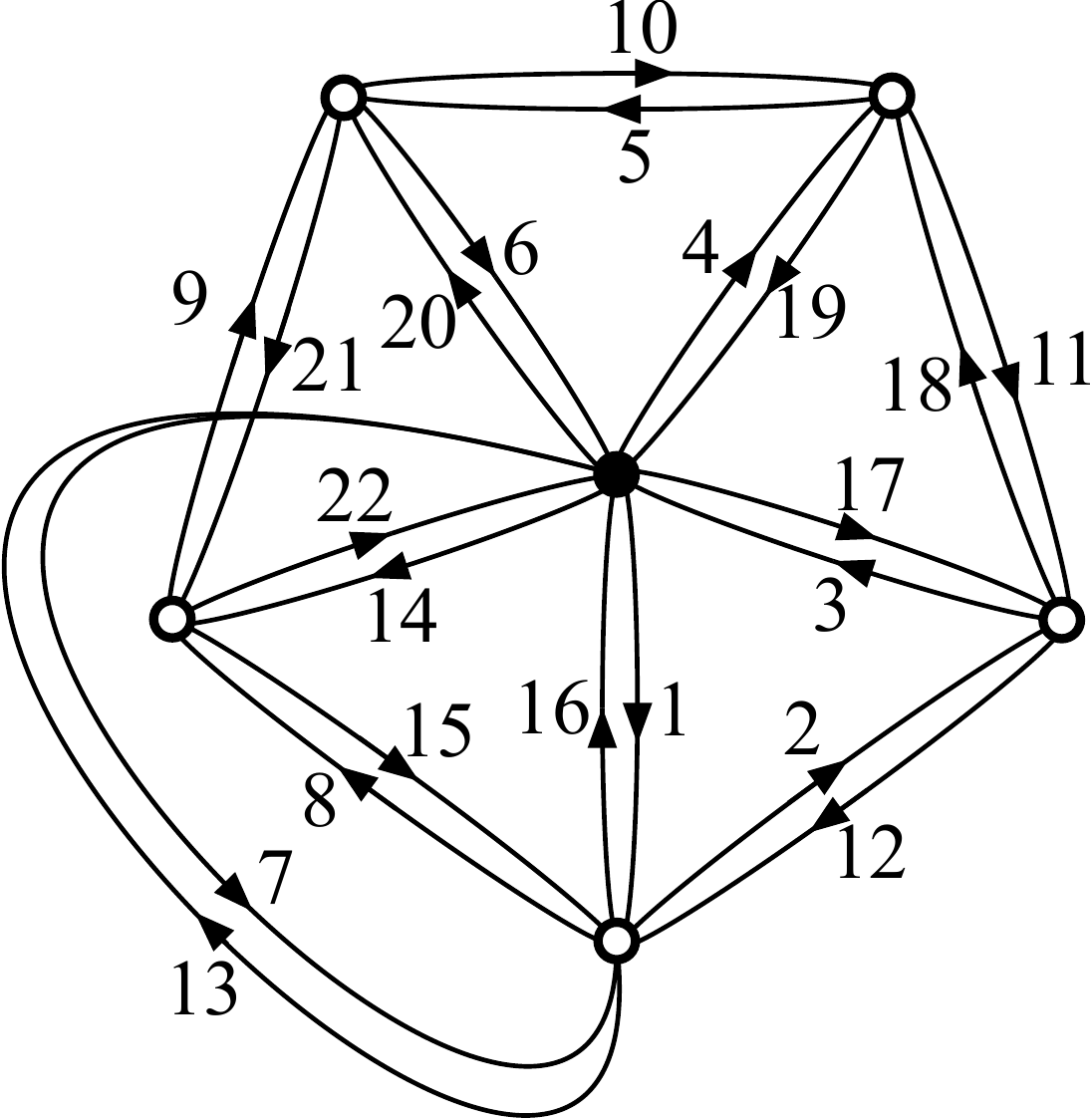} \\
\vspace{-0.355in} \\ 
\hspace{0.5in} a) &
\multicolumn{2}{c}{b)} &
\hspace{0.5in} c)
\end{tabular}
\vspace{-0.1in}
\caption{a) $\Wheeel{7}$ with edges ordered counter-clockwise for the hub vertex \protect\CCW\ and clockwise for the rim vertices \protect\CW,
b) darts along the two faces of $\Wheeel{7}$,
c) darts along the one face of $\Wheeel{6}$.}
\label{fig:Wheeel6}
\end{figure}

\begin{lemma} \label{lem:Wheeel}
$\Wheeel{m}$ is unifacial if $m$ is even, and is bifacial if $m$ is odd.
\end{lemma}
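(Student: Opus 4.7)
The plan is to trace face boundaries explicitly using the rotation rule (a walk entering vertex $v$ along edge $e$ exits along the edge immediately after $e$ in $\theta(v)$), starting from the dart $\dart{h}{r_1}$ along $S^1$. I would first distill the rotations into local propagation rules, then show that the trace organizes into ``triangles'' punctuated by a single ``inner excursion,'' with a clean split depending on the parity of $m$.

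The key observation is that $\theta(r_i) = (S^i, R^i, R^{i-1})$ sends $S^i \mapsto R^i$ for $2 \leq i \leq m-1$ (and the same holds at $r_1$), while $\theta(r_{i+1})$ sends $R^i \mapsto S^{i+1}$ for $i \leq m-2$; moreover $\theta(h)$ sends $S^j \mapsto S^{j+1}$ for $j \leq m-3$. Combining these, whenever the walk leaves $h$ along $S^i$ with $1 \leq i \leq m-2$ it completes the triangle $h \xrightarrow{S^i} r_i \xrightarrow{R^i} r_{i+1} \xrightarrow{S^{i+1}} h$, and for $i \leq m-4$ chains into another triangle with index $i+2$. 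Starting from $S^1$, this produces triangles for $i = 1, 3, 5, \ldots$ until the chain breaks.

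For odd $m$, the chain terminates at the triangle indexed $i = m-2$, after which $\theta(h)$ sends the final entering dart $S^{m-1}$ cyclically to $S^1$, closing a face of length $3(m-1)/2$. I would then restart from the unused dart $\dart{h}{r_2}$ via $S^2$: it produces an even-indexed chain ending at $S^{m-2} \mapsto P^1$, followed by the inner excursion ($P^1$ down, the rim reversed $R^{m-1}, R^{m-2}, \ldots, R^1$, $P^1$ up), the fragment $(S^{m-1}, R^{m-1}, S^1)$, and a cyclic return to $S^2$. Its length is $(5m-1)/2$, and the two face lengths sum to $4m-2 = 2|E|$, confirming bifaciality. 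For even $m$, the odd-indexed chain instead terminates at $i = m-3$, where $\theta(h)$ sends the final entering dart $S^{m-2}$ to $P^1$; the walk then continues seamlessly through the same inner excursion and fragment, then through a complementary chain of even-indexed triangles, closing at $S^1$ after exactly $4m-2$ darts, so all darts lie on a single face.

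The main obstacle is the bookkeeping for the inner excursion: $P^1$ must be traversed in both directions, each rim edge $R^j$ must have both its darts accounted for (with the split between ``triangle'' and ``reversed-rim'' usage depending on the parity of $j$ and of $m$), and the walk must close exactly at the intended starting dart. Once the trace is written out, the verification reduces to arithmetic on dart counts, which must equal $2|E| = 4m-2$.
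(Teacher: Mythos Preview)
Your proposal is correct and takes essentially the same approach as the paper: both arguments trace the face boundaries explicitly, and the paper's two listed faces for odd $m$ are exactly your triangle chain $i=1,3,\ldots,m-2$ and (up to a cyclic shift of starting dart) your even-indexed chain followed by the inner excursion and the final $(S^{m-1},R^{m-1},S^1)$ fragment; for even $m$ the paper simply concatenates the two lists, matching your single-face trace.
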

\begin{proof}
Let $f = \lfloor m/2 \rfloor$ and $c = \lceil m/2 \rceil$. 
$\Wheeel{m}$ has two faces for odd~$m$,
\begin{gather*}
h \ S^{1}  \ r_1  \ R^{1}  \ r_2  \ S^{2}  \ h  \ S^{3}  \ r_3  \ R^{3}  \ r_4  \ \tdots  \ r_{2c-3}  \ R^{2c-3}  \ r_{2c-2}  \ S^{2c-2} \ \text{ and } \\
h  \ P^{1}  \ r_1  \ R^{m-1}  \ r_{m-1}  \ R^{m-2}  \ r_{m-2}  \ R^{m-3}  \ r_{m-3}  \ \tdots  \ r_2  \ R^{1}  \ r_1  \ P^{1} \\
h  \ S^{m-1}  \ r_{m-1}  \ R^{m-1}  \ r_{1}  \ S^{1}  \ h  \ S^{2}  \ r_{2}  \ R^{2}  \ r_{3}  \ S^{3}  \ h  \ S^{4}  \ r_{4}  \ \tdots  \ r_{2f-2}  \ R^{2f-2} \ r_{2f-1} \ S^{2f-1}.
\end{gather*}
When $m$ is even, $\Wheeel{m}$ has one face obtained by removing the `and' above.
\end{proof}

\subsection{Wilf's Rotation System} \label{sec:rotation_Wilf}

\emph{Wilf's graph} is an undirected bipartite graph with vertices $\quotient{\PERMS{n}} \cup \quotient{\MISSING{n}}$ and edges between consistent vertices.
\emph{Wilf's rotation system} is $\Wilf{n} = (V,E,\theta)$ on Wilf's graph $(V,E)$ with the following edge orders for $[\mathbf{p}] = [p_1 p_2 \tdots p_n] \in \quotient{\PERMS{n}}$ and $[\mathbf{q}] = [q_1 q_2 \tdots q_{n-1}] \in \quotient{\MISSING[m]{n}}$
\begin{align}
\theta([\mathbf{p}]) &= 
(\mathbf{p}, [p_2 p_3 \tdots p_n]), \, 
(\mathbf{p}, [p_1 p_3 p_4 \tdots p_n]), \, 
\sdots, \, 
(\mathbf{p}, [p_1 p_2 \tdots p_{n-1}])  \label{eq:thetap} \\
\theta([\mathbf{q}]) &= 
(\mathbf{p}, [q_1 q_2 \tdots q_{n-1} m]), \, 
(\mathbf{p}, [q_1 q_2 \tdots q_{n-2} m q_{n-1}]), \, 
\sdots, \, 
(\mathbf{p}, [q_1 m q_2 q_3 \tdots q_{n-1}]). \label{eq:thetaq}
\end{align}
In other words, edges are ordered by left-to-right deletions for vertices in $\quotient{\PERMS{n}}$ and by right-to-left insertions for vertices in $\quotient{\MISSING{n}}$.
Figure \ref{fig:Wilf4} a) illustrates $\Wilf{4}$. 

\begin{figure}[h]
\begin{tabular}{@{}cc@{}}
\raisebox{-\height}{\includegraphics[scale=0.25]{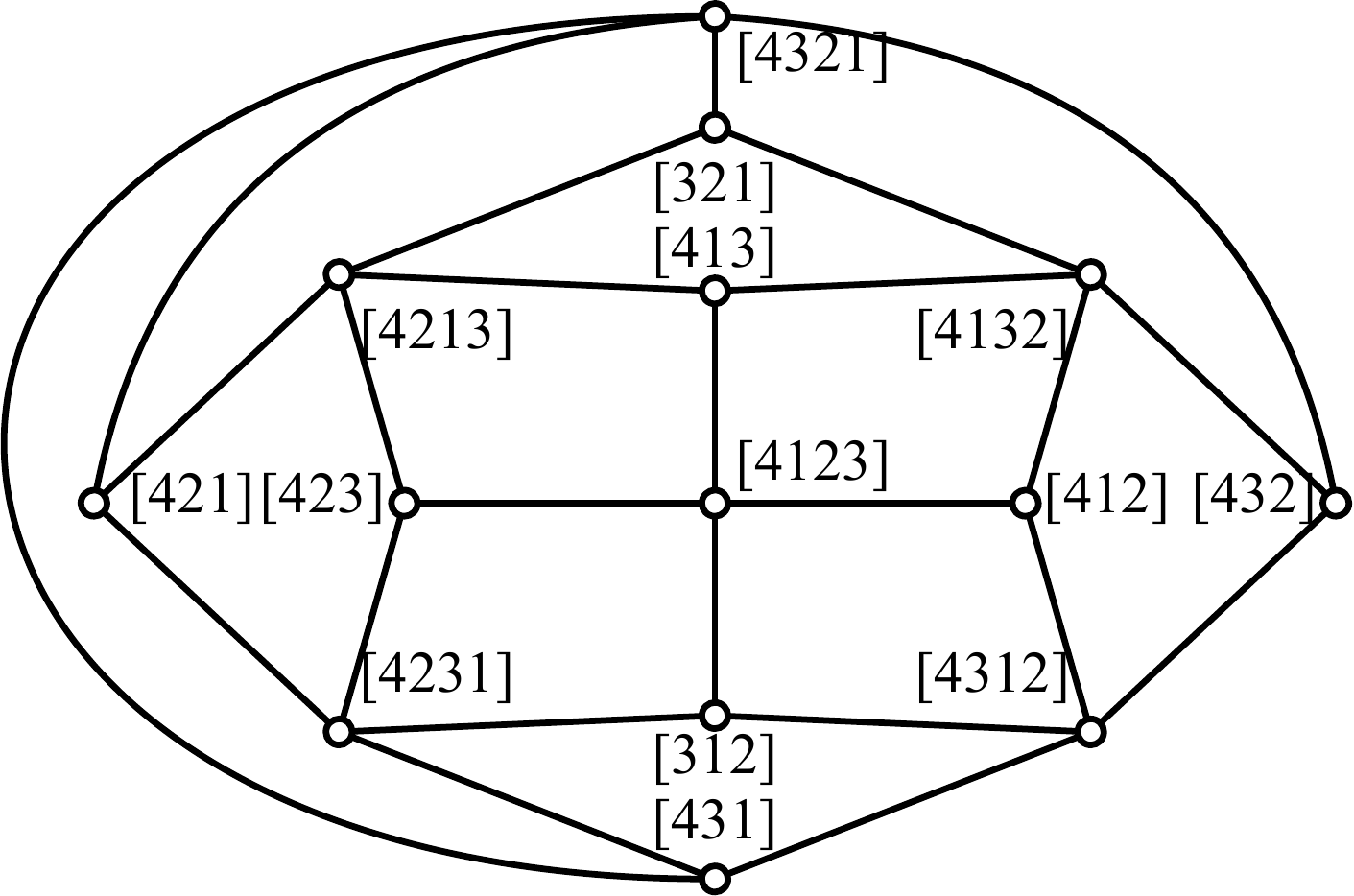}} &
\raisebox{-\height}{\includegraphics[scale=0.25]{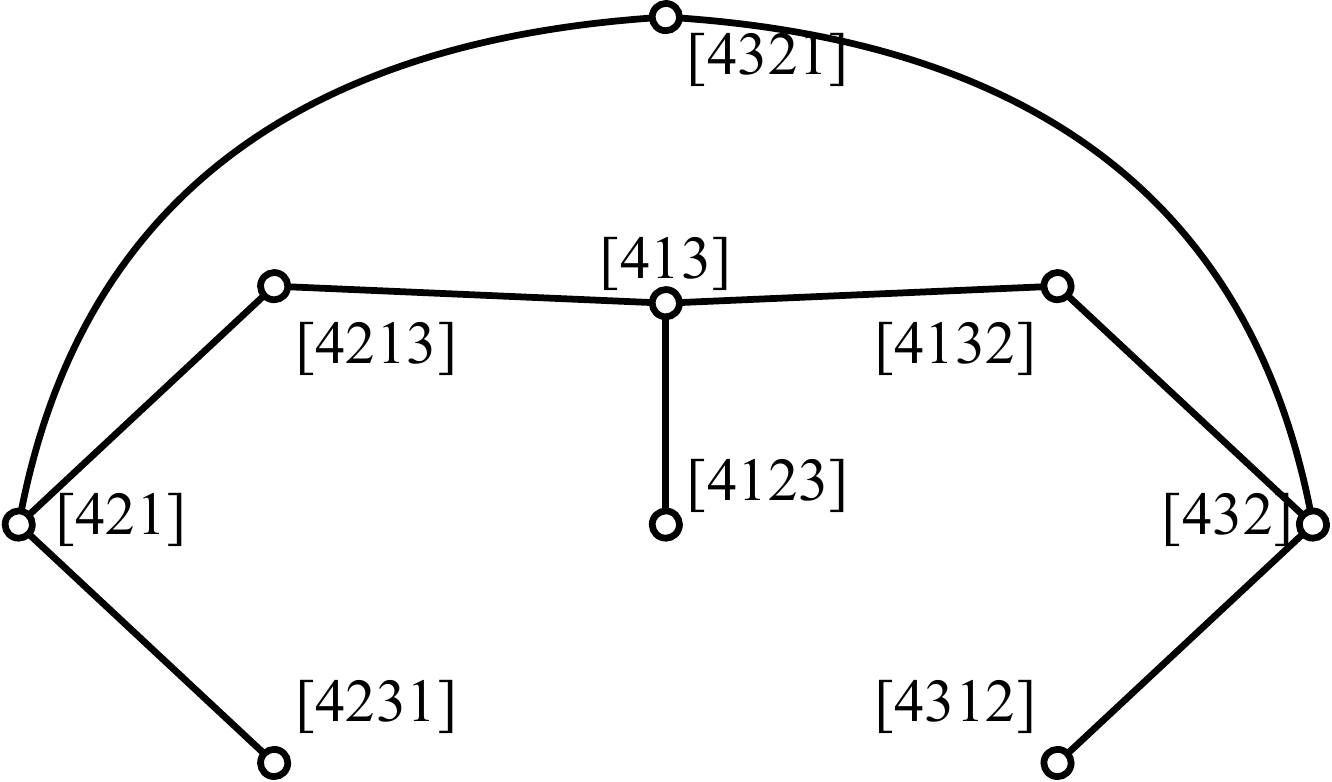}}
\vspace{-0.1in} \\ 
 \hspace{1.1in} a) $\Wilf{4}$ & b) $\WilfCover{4}$
\end{tabular}
\vspace{-0.1in}
\caption{a) Wilf's rotation system $\Wilf{4}$, and b) the induced bifacial rotation system $\Wilf{4}[\quotient{\PERMS{n}} \cup \{[413], [421], [432]\}]$. 
}
\label{fig:Wilf4}
\end{figure}

To appreciate $\Wilf{n}$, we must consider how the basic cycles of $\TheDigraph{n}$ interact.
If $S \in \SigmaCycles{n}$ and $A \in \AltCycles{n}$ intersect, then $S \cap A = \{(\mathbf{p},\mathbf{p} \sigma)\}$, and we let their \emph{sink} be $\sink{S}{A}=\mathbf{p} \sigma$.
For example, $\sink{s([4123])}{a([413])} = 2341$ by Figure \ref{fig:Intersect4}.
Remark \ref{rem:intersectSink} equates cycle intersection with equivalence class consistency, and provides a formula for the sink. 
For simplicity, we let $\sink{[\mathbf{p}]}{[\mathbf{q}]} = \sink{s([\mathbf{p}])}{a([\mathbf{q}])}$.

\begin{remark} \label{rem:intersectSink}
$s(X) \in \SigmaCycles{n}$ and $a(Y) \in \AltCycles{n}$ intersect if and only if $X$ and $Y$ are consistent.
Moreover, if $X = [p_1 p_2 \tdots p_n]$ and $Y = [p_2 p_3 \tdots p_n]$, then $\sink{X}{Y} =~p_1 p_2 \tdots p_n$.
\end{remark}

\begin{figure}[h]
\begin{tabular}{@{}c@{\hspace{0.35in}}c@{}}
\raisebox{0.4\height}{\includegraphics[width=0.35\textwidth]{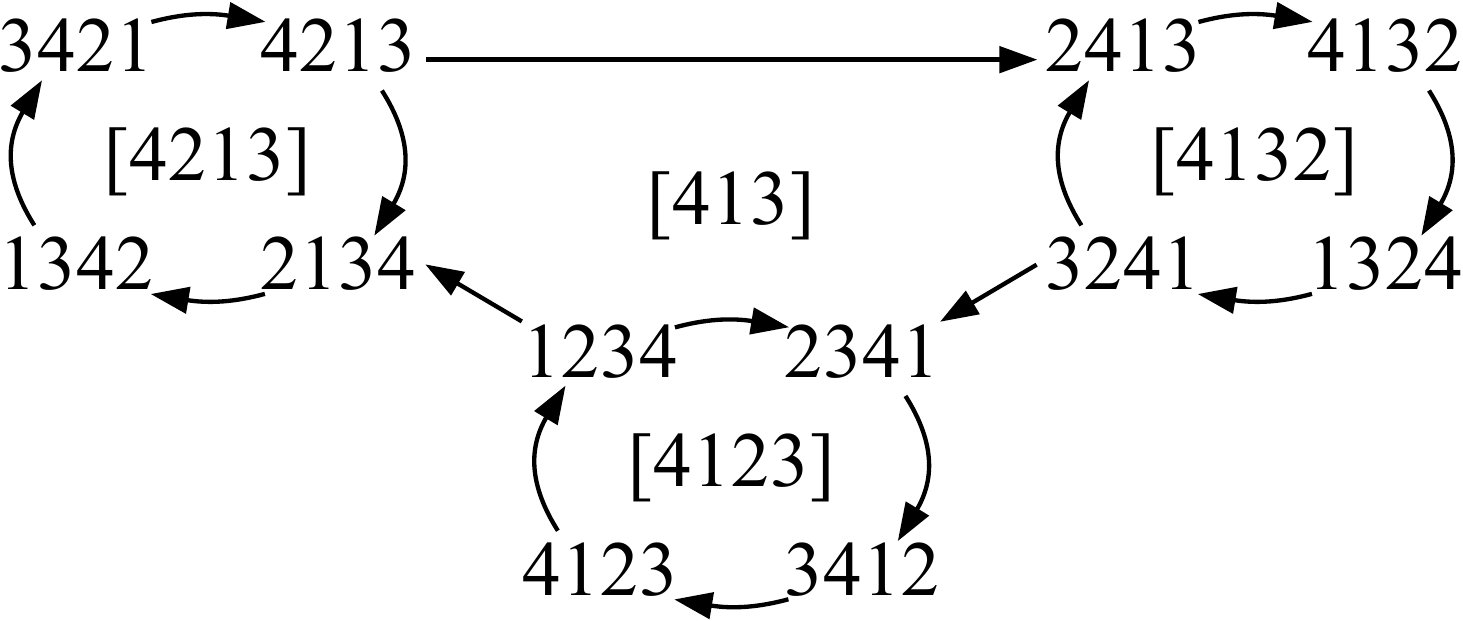}} &
\includegraphics[width=0.35\textwidth]{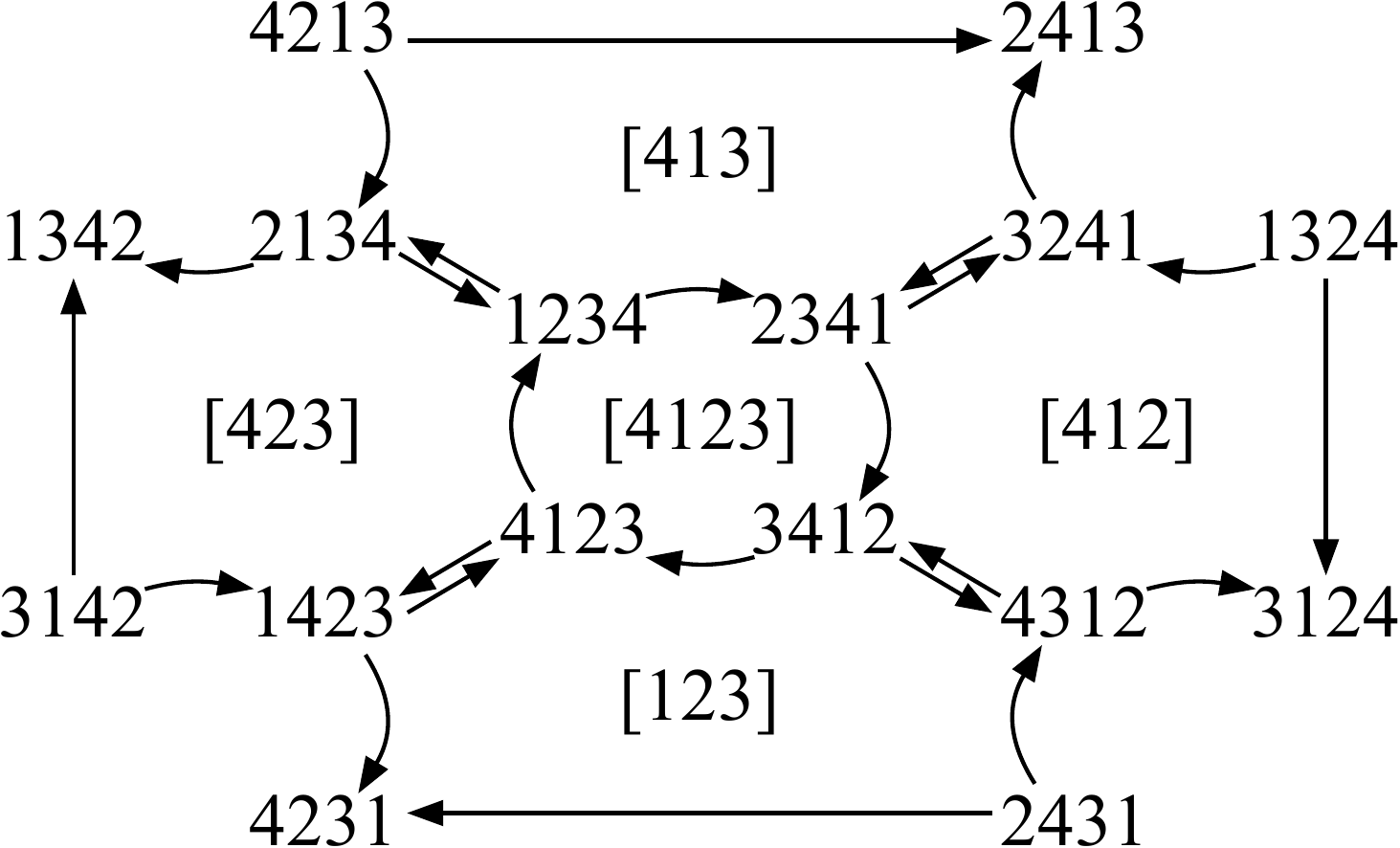} \\ 
\vspace{-0.25in} \\ 
a) & b)
\end{tabular}
\vspace{-0.1in}
\caption{
a) $a([413])$ intersects $s([4213])$, $s([4123])$, $s([4132])$, and b) $s([4123])$ intersects $a([123])$, $a([423])$, $a([413])$, $a([412])$.
}
\label{fig:Intersect4}
\end{figure}

The \emph{induced rotation system} for $R=(V,E,\theta)$ and $U \subseteq V$ is $R[U] = (V',E',\theta')$ with $V' = U'$, $E' = \{e \in E : e=(x,y) \text{ for } x,y \in U\}$, and $\theta'(x) = \theta(x)$ $\forall x \in V'$ with removed edges omitted. 
For example, see Figure \ref{fig:Wilf4} b). 
Lemma \ref{lem:equal}'s map is illustrated by Figure \ref{fig:Map4}, where $F$ is a face in Figure \ref{fig:Wilf4} b) and $C$ is a cycle in Figure~\ref{fig:TheGraphTheCover4}~b). 

\begin{figure}
\begin{tabular}{@{}c@{\;}c@{}}
\raisebox{-0.5\height}{\includegraphics[scale=0.26]{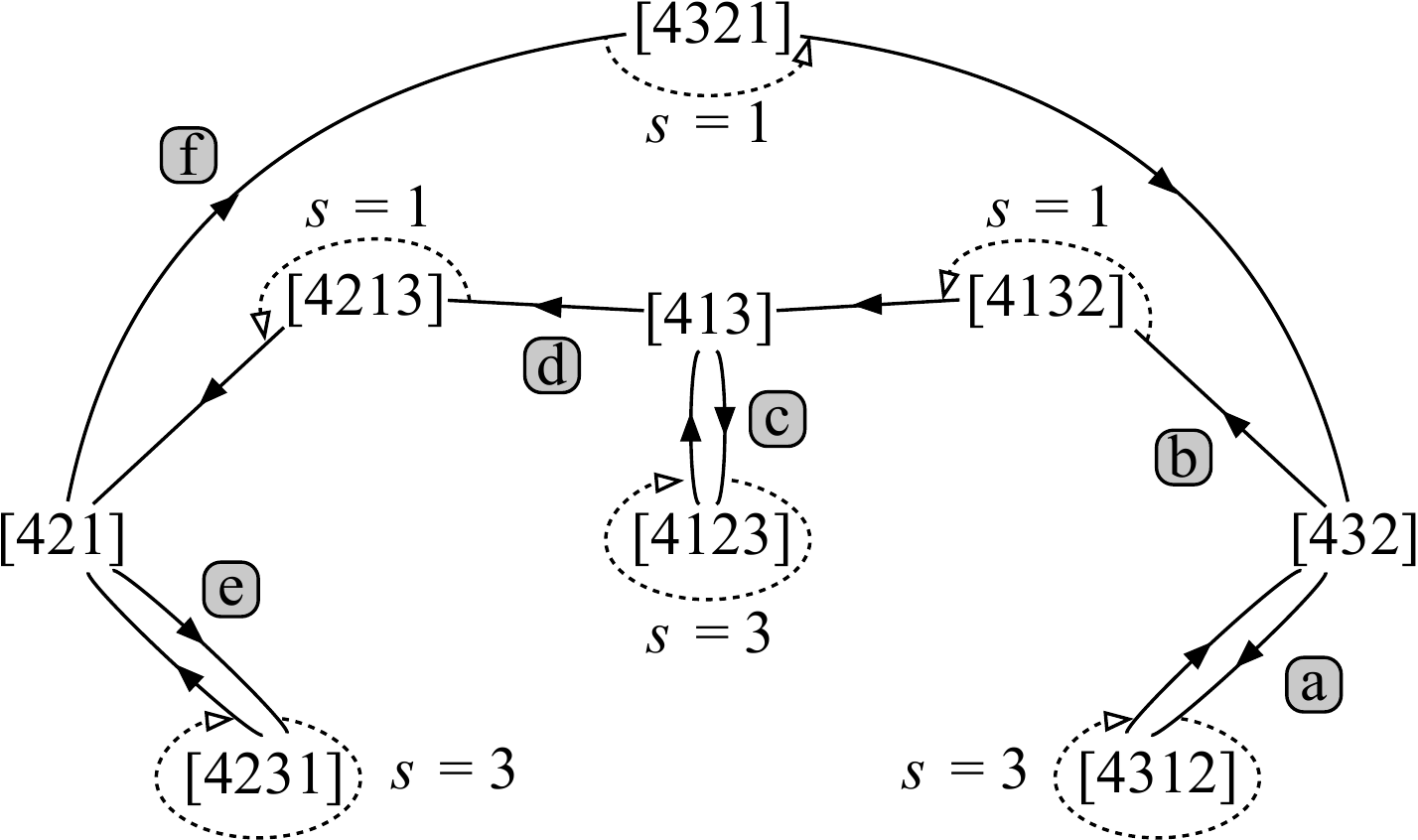}} & 
\raisebox{-0.5\height}{\includegraphics[scale=0.26]{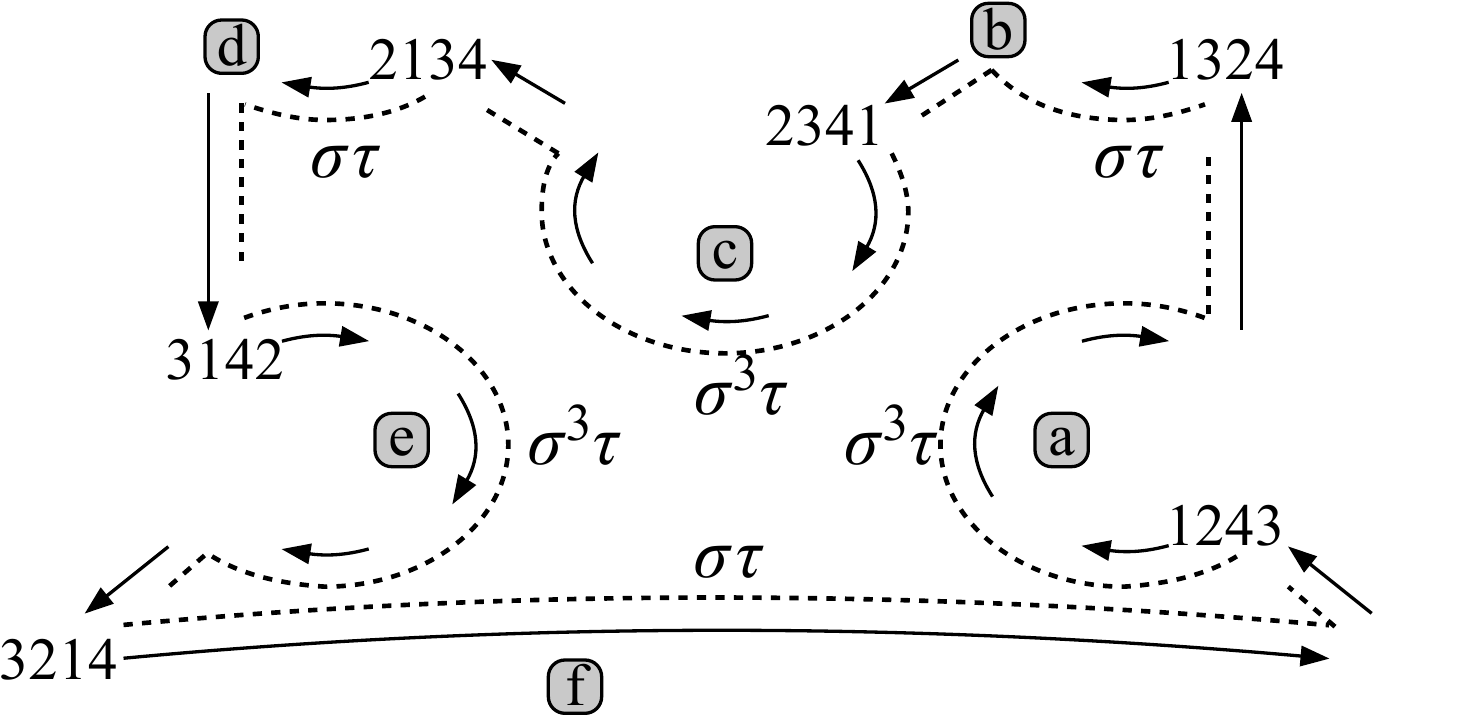}} 
\vspace{-0.2in} \\ 
a) & b) 
\end{tabular}
\vspace{-0.1in}
\caption{
a) A face in $\WilfCover{4}$ maps to b) a cycle in $\SigmaEdges \xor \ACover{4}$.
For example, $s=3$ edges from $\Wilf{4}$ are `skipped' after $\dart{[432]}{[4312]}$, so this dart in a) maps to $\walk{1243 \sigma^3 \tau}$ in b) by \protect\includegraphics[width=0.8em]{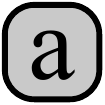}. 
}
\label{fig:Map4}
\end{figure}

\begin{lemma} \label{lem:equal}
The size of $D=\SigmaCycles{n} \xor (a(Y_1) \cup a(Y_2) \cup \tdots \cup a(Y_h))$ is the number of faces plus  isolated vertices in $\Wilf{n}[\quotient{\PERMS{n}} \cup \mathbf{Y}]$ for any $\mathbf{Y} = \{Y_1, Y_2, \sdots, Y_h\} \subseteq \quotient{\MISSING{n}}$.
\end{lemma}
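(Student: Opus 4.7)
The plan is to construct an explicit bijection between the cycles of $D$ and the (faces together with isolated vertices) of the induced system $R := \Wilf{n}[\quotient{\PERMS{n}} \cup \mathbf{Y}]$, using Remark \ref{rem:intersectSink} as the bridge between the combinatorics of $R$ and the structure of $D$.

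I would handle the isolated-vertex case first: a class $X \in \quotient{\PERMS{n}}$ is isolated in $R$ exactly when no $Y \in \mathbf{Y}$ is consistent with it, which by Remark \ref{rem:intersectSink} is equivalent to $s(X)$ being disjoint from every $a(Y_j)$. In this case the symmetric difference leaves $s(X)$ intact as a cycle of $D$; conversely, every pure $\sigma$-cycle of $D$ arises this way, since any shared $\sigma$-edge with some $a(Y_j)$ would be deleted by the XOR.

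The substantive content concerns the faces of $R$ and the remaining cycles of $D$ (those containing a $\tau$-edge). The crux is two order-correspondences that follow directly from Remark \ref{rem:intersectSink} together with the definitions of $\theta$ in Wilf's system: (i) at an $X$-vertex, $\theta(X)$ lists its consistent $Y$'s in the same cyclic order as their sinks $\mathbf{p}, \mathbf{p}\sigma, \mathbf{p}\sigma^2, \sdots$ are encountered along $s(X)$, matching the left-to-right deletion order; and (ii) at a $Y$-vertex, $\theta(Y)$ lists its consistent $X$'s in the same order as their sinks are met while traversing the walk $\mathbf{p}(\tau\sigma^{-1})^{n-1}$ of $a(Y)$, matching the right-to-left insertion order. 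Granted these, I would translate each face $F$ of $R$ dart-by-dart into a closed walk $C_F$ in $D$: at an $X$-vertex of $F$, two successive darts encode a $\sigma$-run on $s(X)$ from one sink to the next, capped by a $\tau$-jump; at a $Y$-vertex, two successive darts encode a $\tau$-jump on $a(Y)$ from one sink to the next, capped by a $\sigma$-step. Because $R$ omits edges to non-$\mathbf{Y}$ vertices, the ``next edge in $\theta$'' inside $R$ is exactly the ``next sink relevant to the XOR'' along the corresponding basic cycle, so the face-traversal rule synchronizes with the $D$-traversal at every step.

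I expect the main obstacle to be establishing (i) and (ii) rigorously and then checking the closure of $C_F$, particularly in wrap-around cases: if an $X$- or $Y$-vertex has only one neighbor in $R$, both darts of its single edge lie on the same face, and $C_F$ must then make a complete loop around the corresponding basic cycle in $D$---one has to verify that such loops close properly via the cyclic structure of $\theta$. Given the correspondences, surjectivity and injectivity both follow because the trajectory of $C_F$ in $D$ is locally determined at each sink by the face-traversal rule.
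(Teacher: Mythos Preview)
Your approach is essentially the paper's: both handle isolated $X$-vertices identically, then biject faces of $R$ with the $\tau$-containing cycles of $D$ by using Remark~\ref{rem:intersectSink} to translate the cyclic orders $\theta$ into sink orders along the basic cycles. The paper implements this by writing a face as $[\mathbf{q}^1]\,e_1\,[\mathbf{p}^1]\,f_1\,[\mathbf{q}^2]\,\cdots$, letting $s_i$ count the edges of $\Wilf{n}$ skipped between $e_i$ and $f_i$ in $\theta([\mathbf{p}^i])$, and verifying by direct computation that $\sink{[\mathbf{p}^i]}{[\mathbf{q}^i]}\,\sigma^{s_i}\,\tau = \sink{[\mathbf{p}^{i+1}]}{[\mathbf{q}^{i+1}]}$; the resulting cycle is $C = \walk{\sink{\mathbf{p}^1}{\mathbf{q}^1}\,\sigma^{s_1}\tau\,\sigma^{s_2}\tau\cdots\sigma^{s_k}\tau}$.

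One simplification you are overlooking: because $R$ retains \emph{all} of $\quotient{\PERMS{n}}$, no edges are ever skipped at a $Y$-vertex, so your correspondence~(ii) is only ever invoked for a single adjacent step in $\theta(Y)$. Your description of the $Y$-transition as ``a $\tau$-jump on $a(Y)$ from one sink to the next, capped by a $\sigma$-step'' overstates what happens there---in $D$ exactly one $\tau$-edge is traversed per $Y$-vertex, which is why the paper's cycle has the clean shape $\sigma^{s_1}\tau\sigma^{s_2}\tau\cdots$ with all the variable-length runs concentrated at the $X$-vertices. Recognizing this asymmetry up front makes the closure and wrap-around checks routine: your ``only one neighbor in $R$'' worry at an $X$-vertex is precisely the case $s_i = n{-}1$, which the paper dispatches as a separate branch of the explicit calculation rather than as a structural obstacle.
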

\begin{proof}
$D$ is a cycle cover by Lemma \ref{lem:xor}.
Let $W = \Wilf{n}$ with edge order $\theta$ and $W' = \Wilf{n}[\quotient{\PERMS{n}} \cup \mathbf{Y}]$ with edge order $\theta'$.
Vertex $[\mathbf{p}] \in \quotient{\PERMS{n}}$ is isolated in $W'$ if and only if the $\sigma$-cycle $s([\mathbf{p}])$ is in $D$ by Remark \ref{rem:intersectSink}, and no vertices in $\mathbf{Y}$ are isolated.
Thus, bijecting the faces in $W'$ with the cycles in $D$ that have a $\tau$-edge will prove the lemma.
Consider a face with $[\mathbf{q}^i] \in \mathbf{Y}$ and $[\mathbf{p}^i] \in \quotient{\PERMS{n}}$ for $i \in \nset{k}$
\begin{equation*}
F = 
[\mathbf{q}^1] \ \ e_1 \ \ 
[\mathbf{p}^1] \ \ f_1 \ \ 
[\mathbf{q}^2] \ \ e_2 \ \
[\mathbf{p}^2] \ \ f_2 \ \ 
\tdots \ \ f_{k-1} \ \ 
[\mathbf{q}^k] \ \ e_k \ \
[\mathbf{p}^k] \ \ f_k.
\end{equation*}
Notice that each $e_i$ is immediately followed by $f_i$ in $\theta'(\mathbf{p}^i)$.
However, this may not be true in $\theta(\mathbf{p}^i)$, and we let $s_i$ count the intermediate edges from $e_i$ to $f_i$ in $\theta(\mathbf{p}^i)$.
In other words, $s_i$ edges from $W$ are ``skipped over" on the $e_i \ [\mathbf{p}^i] \ f_i$ portion of~$F$, where $0 \leq s_i \leq n{-}1$.
We claim that $D$ contains the following directed cycle
\begin{equation*}
C = \walk{\sink{\mathbf{p}^1}{\mathbf{q}^1} \, \sigma^{s_1} \, \tau \, \sigma^{s_2} \, \tau \ \tdots \ \sigma^{s_k} \, \tau}.
\end{equation*}
To prove this claim, we show that $\walk{\sink{\mathbf{p}^i}{\mathbf{q}^i} \, \sigma^{s_i} \, \tau}$ ends at $\sink{\mathbf{p}^{i+1}}{\mathbf{q}^{i+1}}$ for $i \in \nset{k}$ and each edge is in $D$. 
By Remark \ref{rem:intersectSink}, there exists $p_1 p_2 \tdots p_n \in \PERMS{n}$ such~that
\begin{equation*} 
[\mathbf{p}^i] = [p_1 p_2 \tdots p_n] \text{ and } [\mathbf{q}^i] = [p_2 p_3 \tdots p_n] \text{ and } \sink{[\mathbf{p}^i]}{[\mathbf{q}^i]} = p_1 p_2 \tdots p_n.
\end{equation*}
To simplify notation, let $s = s_i$.
By $s$ and $e_i$ and the definition of $\theta([\mathbf{p}^{i}])$ from \eqref{eq:thetap},
\begin{equation}  
[p_1 p_3 p_4 \tdots p_n], [p_1 p_2 p_4 p_5 \tdots p_n], \sdots, [p_1 \tdots p_s p_{s+2} p_{s+3} \tdots p_n] \notin \mathbf{Y} \text{ if } s>0, \label{eq:notin}
\end{equation}
and the next vertex in this sequence is $[\mathbf{q}^{i+1}] \in \mathbf{Y}$ below
\begin{align*}
[\mathbf{q}^{i+1}] &= 
\begin{cases}
[p_1 \tdots p_{s+1} p_{s+3} p_{s+4} \tdots p_n] = [p_{s+1} p_{s+3} p_{s+4} \tdots p_n p_1 p_2 \tdots p_{s}] & \text{ if } s<n{-}1 \\
[\mathbf{q}^{i}] = [p_2 p_3 \tdots p_n]= [p_n p_2 p_3 \tdots p_{n-1}] & \text{ if } s=n{-}1.
\end{cases}
\end{align*}
Since $W'$ includes all $\quotient{\PERMS{n}}$ vertices, $f_i$ is followed by the same edge in $\theta'([\mathbf{p}^{i}])$ and $\theta([\mathbf{p}^{i}])$.
Therefore, by the definition of $\theta([\mathbf{q}^{i+1}])$ from \eqref{eq:thetaq}, $[\mathbf{p}^{i+1}]$ is equal to
\begin{align*}
\begin{cases}
[p_1 p_2 \tdots p_s p_{s+2} p_{s+1} p_{s+3} p_{s+4} \tdots p_n] = [p_{s+2} p_{s+1} p_{s+3} p_{s+4} \tdots p_n p_1 p_2 \tdots p_{s}] & \text{ if } s < n{-}1 \\
[p_2 p_3 \tdots p_{n-1} p_1 p_n] = [p_1 p_n p_2 p_3 \tdots p_{n-1}] & \text{ if } s = n{-}1.
\end{cases}
\end{align*}
Now we prove that $\sink{[\mathbf{p}^i]}{[\mathbf{q}^i]} \, \sigma^{s} \, \tau$ ends at the desired vertex
\begin{equation*}
p_1 p_2 \tdots p_n \, \sigma^{s} \, \tau = 
\begin{cases}
p_{s+2} p_{s+1} p_{s+3} p_{s+4} \tdots p_n p_1 p_2 \tdots p_{s} = \sink{[\mathbf{p}^{i+1}]}{[\mathbf{q}^{i+1}]} & \text{ if } s<n{-}1 \\
p_1 p_n p_2 p_3 \tdots p_{n-1} = \sink{[\mathbf{p}^{i+1}]}{[\mathbf{q}^{i+1}]} & \text{ if } s = n{-}1.
\end{cases}
\end{equation*}
Finally, the edges on this path are in $W'$ by \eqref{eq:notin} and $[\mathbf{q}^{i}],[\mathbf{q}^{i+1}] \in \mathbf{Y}$.
Therefore, the claim is true.
This mapping from $F$ to $C$ is invertible and provides our bijection.
\end{proof}

\section{Hamilton Paths and Cycles} \label{sec:Hamilton}

This section proves that $\TheCycle{n}$ is a Hamilton cycle, and $\TheCover{n}$ is a disjoint cycle cover of size two. 
By Lemmas \ref{lem:rules} and \ref{lem:equal}, we must prove that the following induced rotation systems have no isolated vertices and are unifacial and bifacial, respectively
\begin{equation} \label{eq:Wilfs}
\WilfCycle{n} = \Wilf{n}[\quotient{\PERMS{n}} \cup \YCycle{n}] \text{ and } \WilfCover{n} = \Wilf{n}[\quotient{\PERMS{n}} \cup \YCover{n}].
\end{equation}
We begin with two preliminary steps.
First, we show that $\WilfCycle{n}$ and $\WilfCover{n}$ are the sparsest induced rotation systems that are unifacial and bifacial, respectively.
Second, we reduce $\WilfCycle{n}$ and $\WilfCover{n}$ to simpler rotation systems without changing their number of faces.
Besides our main results, we create the Hamilton path $\ThePath{n}$ from $\TheCover{n}$, and show that $\TheCycle{n}$ has size two when $n$ is even.

\subsection{Edge Surplus} \label{sec:Hamilton_surplus}

Determining the number of faces in a rotation system $R$ is simplified when its graph $B=(V,E)$ is close to a tree.
We say that $R$ is \emph{connected} if $G$ is connected, and its \emph{edge surplus} is $|E|-|V|+1$. 
(The connected results of Remark \ref{rem:facesTree} follow from Lemma \ref{lem:reduce} in Section \ref{sec:Hamilton_reduce} by reductions to an edge and a~loop.)

\begin{remark} \label{rem:facesTree}
If $R$ is connected with edge surplus $0$ or $1$, then $R$ has $1$ or $2$ faces, respectively.
If $R$ is disconnected with edge surplus $-s$, then $R$ has at least $s{+}1$~faces.
\end{remark}

Now consider the edge surplus of induced rotation systems of $\Wilf{n}$.

\begin{lemma} \label{lem:surplusInduced}
$\Wilf{n}[\quotient{\PERMS{n}} \cup \mathbf{Y}]$ has edge surplus $(n{-}2)|\mathbf{Y}|{-}(n{-}1)!{+}1$ for ${\mathbf{Y} \subseteq \quotient{\MISSING{n}}}$.
\end{lemma}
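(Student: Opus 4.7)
The plan is to compute $|V|$ and $|E|$ directly for the induced rotation system and then subtract. The key structural fact I would use is that $\Wilf{n}$ is bipartite with parts $\quotient{\PERMS{n}}$ and $\quotient{\MISSING{n}}$: the consistency relation from Section~\ref{sec:preliminaries_equiv} only relates classes across these two parts, so by construction no edge of $\Wilf{n}$ lies inside either part. Consequently, the induced subgraph $\Wilf{n}[\quotient{\PERMS{n}} \cup \mathbf{Y}]$ has no edges within $\quotient{\PERMS{n}}$ and no edges within $\mathbf{Y}$; every edge crosses the bipartition.

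Counting vertices is immediate: $|V| = |\quotient{\PERMS{n}}| + |\mathbf{Y}| = (n{-}1)! + |\mathbf{Y}|$, using the cardinality from Section~\ref{sec:structure_basic}. For the edges, I would sum degrees from the $\mathbf{Y}$-side of the bipartition. Since every neighbor of a class $[\mathbf{q}] \in \mathbf{Y}$ lies in $\quotient{\PERMS{n}}$ (by bipartiteness) and $\quotient{\PERMS{n}}$ is fully retained, the degree of $[\mathbf{q}]$ in the induced subgraph equals its degree in $\Wilf{n}$. Definition \eqref{eq:thetaq} lists exactly $n{-}1$ incidences at $[\mathbf{q}]$, one for each position $2,3,\ldots,n$ in which the missing symbol $m$ can be inserted into $\mathbf{q}$; the ``wraparound'' insertion (before $q_1$) is identified with the insertion after $q_{n-1}$ and so appears only once. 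Hence $|E| = (n{-}1)|\mathbf{Y}|$.

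Putting these together yields
\[
|E| - |V| + 1 = (n{-}1)|\mathbf{Y}| - \bigl((n{-}1)! + |\mathbf{Y}|\bigr) + 1 = (n{-}2)|\mathbf{Y}| - (n{-}1)! + 1,
\]
as claimed. The only subtle point I anticipate is confirming that the $n{-}1$ entries of \eqref{eq:thetaq} really correspond to $n{-}1$ distinct neighbors (so the degree sum truly equals $(n{-}1)|\mathbf{Y}|$); this reduces to the observation that two insertions of $m$ into $\mathbf{q}$ produce rotationally equivalent permutations exactly when they differ only by the wraparound identification already folded into the listing. Everything else is bookkeeping.
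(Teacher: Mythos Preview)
Your proposal is correct and follows the same approach as the paper: count $|V|=(n{-}1)!+|\mathbf{Y}|$, observe each $[\mathbf{q}]\in\mathbf{Y}$ has degree $n{-}1$ (the paper cites Remark~\ref{rem:intersectSink}, you cite \eqref{eq:thetaq} directly), and subtract. One small note: your caveat about distinctness of neighbors is unnecessary, since rotation systems here allow parallel edges and the $n{-}1$ entries of $\theta([\mathbf{q}])$ are by definition the $n{-}1$ edge-incidences at $[\mathbf{q}]$, so $|E|=(n{-}1)|\mathbf{Y}|$ holds regardless of whether the endpoints are distinct.
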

\begin{proof}
The graph has $(n{-}1)!{+}|\mathbf{Y}|$ vertices, and each vertex in $\mathbf{Y}$ has degree $n{-}1$ by Remark~\ref{rem:intersectSink}.
Thus, the edge surplus is $(n{-}1)|\mathbf{Y}|{-}((n{-}1)!{+}|\mathbf{Y}|){+}1$. 
\end{proof}

\begin{corollary} \label{cor:surplusThe}
The edge surplus of $\WilfCycle{n}$ and $\WilfCover{n}$ is $n{-}1$ and $1$, respectively.
\end{corollary}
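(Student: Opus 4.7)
The plan is to reduce the corollary to a counting argument and then invoke Lemma \ref{lem:surplusInduced}. That lemma already gives a closed formula for the edge surplus in terms of $|\mathbf{Y}|$, so all that remains is to compute $|\YCycle{n}|$ and $|\YCover{n}|$ and simplify.

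First I would compute $|\FixSubset[n]{r}{m}|$. A class in $\FixSubset[n]{r}{m}$ has a representative of the form $n\,r\,p_3\,p_4\cdots p_{n-1}$ where $\{p_3,\ldots,p_{n-1}\} = \nset{n}\setminus\{m,n,r\}$, so there are $(n-3)!$ such strings; since the two-symbol prefix $n\,r$ fixes the rotation, each string is in a distinct class, giving $|\FixSubset[n]{r}{m}|=(n-3)!$. (The $n=5$ example $\FixSubset[5]{1}{2}=\{[5134],[5143]\}$ has size $2=(5-3)!$, confirming this.)

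Next I would verify that the unions defining $\YCycle{n}$ and $\YCover{n}$ in \eqref{eq:YCycle} and \eqref{eq:YCover} are disjoint. Within each class, both the missing symbol $m$ and the symbol $r$ immediately right of $n$ are rotation invariants, so $\FixSubset[n]{r}{m}$ and $\FixSubset[n]{r'}{m'}$ are disjoint whenever $(r,m)\neq(r',m')$; the pairs appearing in \eqref{eq:YCycle}--\eqref{eq:YCover} are distinct. The lone singleton $\{[1\,2\cdots n{-}1]\}$ in $\YCycle{n}$ is missing $n$, while every $\FixSubset[n]{r}{m}$ in \eqref{eq:YCycle} has $m\in\{2,\ldots,n{-}1\}$, so it is disjoint from all of them. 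Hence
\begin{equation*}
|\YCycle{n}|=(n-1)(n-3)!+1 \quad \text{and} \quad |\YCover{n}|=(n-1)(n-3)!.
\end{equation*}

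Finally I would plug into Lemma \ref{lem:surplusInduced} using the identity $(n-2)(n-1)(n-3)!=(n-1)!$. For $\YCycle{n}$ the edge surplus is $(n-2)\bigl((n-1)(n-3)!+1\bigr)-(n-1)!+1 = (n-1)!+(n-2)-(n-1)!+1 = n-1$. For $\YCover{n}$ it is $(n-2)(n-1)(n-3)!-(n-1)!+1 = (n-1)!-(n-1)!+1 = 1$. No step here is a real obstacle; the only thing to double-check carefully is the disjointness of the last term $\FixSubset{n-1}{2}$ from $\FixSubset{1}{2}$ in \eqref{eq:YCycle}, which is immediate because the $r$-values $n{-}1$ and $1$ differ whenever $n\geq 3$.
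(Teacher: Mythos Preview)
Your proposal is correct and follows essentially the same approach as the paper: compute $|\FixSubset[n]{r}{m}|=(n-3)!$, deduce $|\YCycle{n}|=(n-1)(n-3)!+1$ and $|\YCover{n}|=(n-1)(n-3)!$, and plug into Lemma~\ref{lem:surplusInduced}. You simply supply more detail than the paper does (the disjointness check and the explicit arithmetic), but the argument is the same.
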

\begin{proof}
Notice $|\FixSubset[n]{r}{m}| = (n{-}3)!$ for each $r$ and $m$.
Thus, $|\YCycle{n}| = (n{-}1)(n{-}3)!{+}1$ and $|\YCover{n}| = (n{-}1)(n{-}3)!$, and so the results follow from Lemma \ref{lem:surplusInduced}. 
\end{proof}

These results imply that $\YCycle{n}$ and $\YCover{n}$ are as small as possible.
More specifically, if $|\mathbf{Y}| < (n{-}1)(n{-}3)! = |\YCover{n}|$, then $\Wilf{n}[\quotient{\PERMS{n}} \cup \mathbf{Y}]$ has at least $n{-}2$ faces by Lemma \ref{lem:surplusInduced} and Remark \ref{rem:facesTree}.
Similarly, if $|\mathbf{Y}| = (n{-}1)(n{-}3)! < |\YCycle{n}|$, then $\Wilf{n}[\quotient{\PERMS{n}} \cup \mathbf{Y}]$ has at least two faces.
Thus, $\WilfCycle{n}$ and $\WilfCover{n}$ are the sparsest unifacial and bifacial induced rotation systems, respectively. 

Remark \ref{rem:facesTree} implies that we only need to prove that $\WilfCover{n}$ is connected in order to prove that it is bifacial.
Remark \ref{rem:surplusWheeel} is used when proving that $\WilfCycle{n}$ is unifacial.

\begin{remark} \label{rem:surplusWheeel}
The edge surplus of the spinning-wheeel $\Wheeel{m}$ is $m$.
\end{remark}

\subsection{Reductions} \label{sec:Hamilton_reduce}

Consider the following operations on $R = (V,E,\theta)$ with $v \in V$
\setlength{\leftmargini}{0.4\leftmargini}
\begin{itemize}
\item If $v$ has degree one and is adjacent to $u$, then \emph{deleting $v$} is $R \backslash v = (V',E',\theta')$ for $V  \,{=}\, V \backslash \{v\}$, $E'  \,{=}\, E \backslash \{(v,u)\}$, and $\theta'(x)  \,{=}\, \theta(x)$ $\forall x \in V'$ except $\theta'(u)$ omits $(u,v)$.
\item If $v$ has degree two and is adjacent to distinct $x$ and $y$, then \emph{smoothing $v$} is $R / v = (V',E',\theta')$ for $V  \,{=}\, V \backslash \{v\}$, $E'  \,{=}\, E \backslash \{(v,x),(v,y)\}$, and $\theta'(x)  \,{=}\, \theta(x)$ $\forall x \in V'$ except $\theta'(x)$ omits $(v,x)$ and $\theta'(y)$ omits $(v,y)$.
\end{itemize}
\setlength{\leftmargini}{1.0\leftmargini}
If $R'$ is obtained from $R$ by a series of these operations, then $R$ \emph{reduces} to $R'$ and we write $R \succ R'$.
Figure \ref{fig:Path5} shows $\WilfCover{4} \succ \WilfP{4}$ ($\WilfP{4}$ is defined after Lemma~\ref{lem:degree12}). 

\begin{remark} \label{rem:surplus}
If $R \succ R'$, then they have the same edge surplus.
\end{remark}

\begin{remark} \label{rem:components}
If $R \succ R'$, then they have an equal number of connected components.
\end{remark}

\begin{lemma} \label{lem:reduce}
If $R \succ R'$, then they have the same number of faces.
\end{lemma}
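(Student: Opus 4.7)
The plan is to induct on the number of operations in the reduction $R \succ R'$: since $\succ$ is transitive and generated by single-step deletions and smoothings, it suffices to show that each elementary operation preserves the face count. Both cases reduce to a local dart-chasing argument at the removed vertex, with every face that avoids that vertex left unchanged.

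For a pendant deletion $R \succ R \backslash v$ with $v$ adjacent to $u$ via edge $e$: because $\theta(v)$ consists of $e$ alone, $e$ is its own cyclic successor at $v$, so the darts $\dart{u}{v}$ and $\dart{v}{u}$ appear consecutively in the face carrying them. Writing that face as $(\ldots, d', \dart{u}{v}, \dart{v}{u}, d'', \ldots)$, where $d'$ arrives at $u$ via the predecessor of $e$ in $\theta(u)$ and $d''$ leaves $u$ via the successor of $e$, we observe that $\theta'(u)$ just omits $e$, so in $R \backslash v$ the dart $d'$ is immediately followed by $d''$. Hence the face shortens to $(\ldots, d', d'', \ldots)$, losing its two darts at $v$ but remaining a single face; all other faces are unchanged, so the face count is preserved.

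For a smoothing $R \succ R/v$ with $v$ of degree two adjacent to distinct $x, y$ via $e_1 = (v,x)$ and $e_2 = (v,y)$ and $\theta(v) = (e_1, e_2)$: an analogous dart-chase shows that $\dart{x}{v}$ is followed by $\dart{v}{y}$ in its face, and $\dart{y}{v}$ by $\dart{v}{x}$ in its face. Tracking what happens at $x$ and $y$ once $e_1$ and $e_2$ are spliced out of the cyclic orders, one verifies that the four darts through $v$ are replaced by a consistent shortcut whose net effect preserves the face count from $R$ to $R/v$.

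The main obstacle is the smoothing case, where the two dart-pairs at $v$ may lie on the same face of $R$ or on two distinct faces, and the bookkeeping must handle both subcases; in particular, one must check at each of $x$ and $y$ that the dart sequence re-glues in the correct way so that no face is split or merged. Once that check is done, the identity on faces avoiding $v$ extends to a bijection between the faces of $R$ and those of $R'$, completing the proof.
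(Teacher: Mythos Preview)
Your proposal is correct and follows essentially the same route as the paper: reduce to a single elementary operation and do a local dart-chase at the removed vertex, so that each face passing through $v$ is shortened in place while all other faces are untouched. Your caution about the smoothing subcases (same face versus two faces) is harmless but unnecessary --- since the new edge $(x,y)$ occupies exactly the cyclic-order slot vacated by $e_1$ at $x$ and by $e_2$ at $y$, each segment $x\ e_1\ v\ e_2\ y$ simply becomes $x\ (x,y)\ y$ with identical continuation on both sides, and likewise for the reverse segment, so no splitting or merging can occur; the paper asserts exactly this replacement in one line.
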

\begin{proof}
Let $R=(V,E,\theta)$ and consider one operation.
If $v$ has degree one, then a face in $R$ contains $u \ (u,v) \ v \ (v,u) \ u$ while an otherwise identical face in $R \backslash v$ contains $u$.
If $v$ has degree two, then a face in $R$ has $x \ (x,v) \ v \ (v,y) \ y$ while an otherwise identical face in $R / v$ has $x \ (x,y) \ y$, and a face in $R$ has $y \ (y,v) \ v \ (v,x) \ x$ while an otherwise identical face in $R / v$ has $y \ (y,x) \ x$.
All other faces are unchanged.
\end{proof}

\begin{figure}[h]
\begin{tabular}{@{}cc@{}}
\raisebox{-0.5\height}{\includegraphics[scale=0.36]{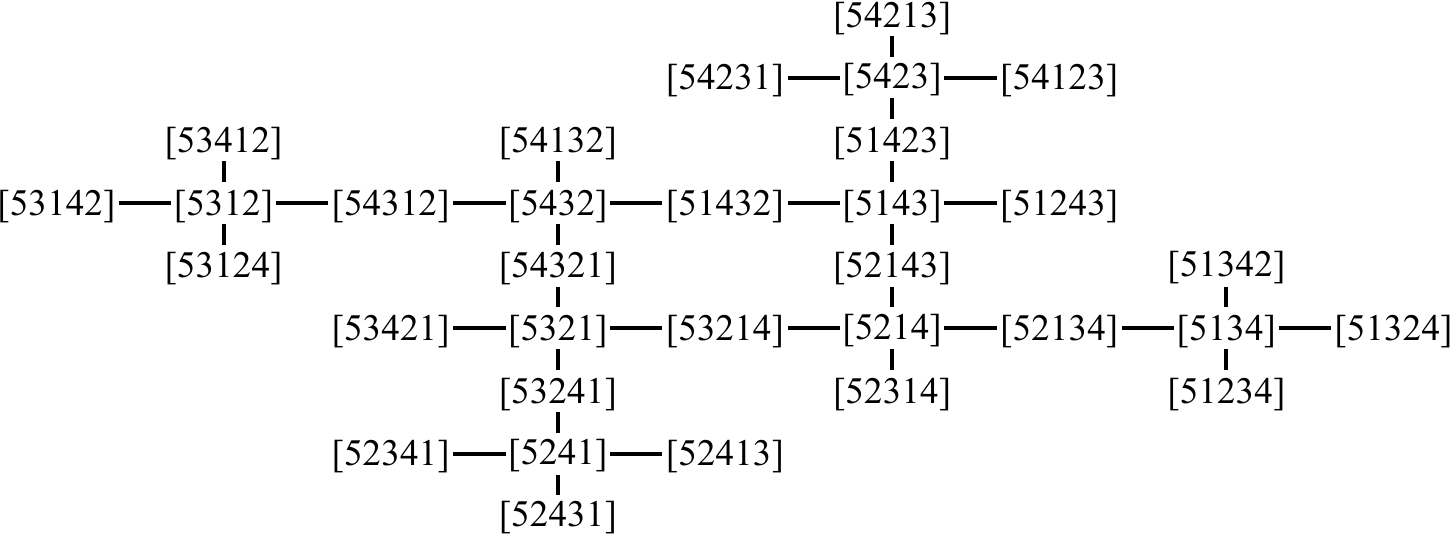}} & 
\raisebox{-0.5\height}{\includegraphics[scale=0.15]{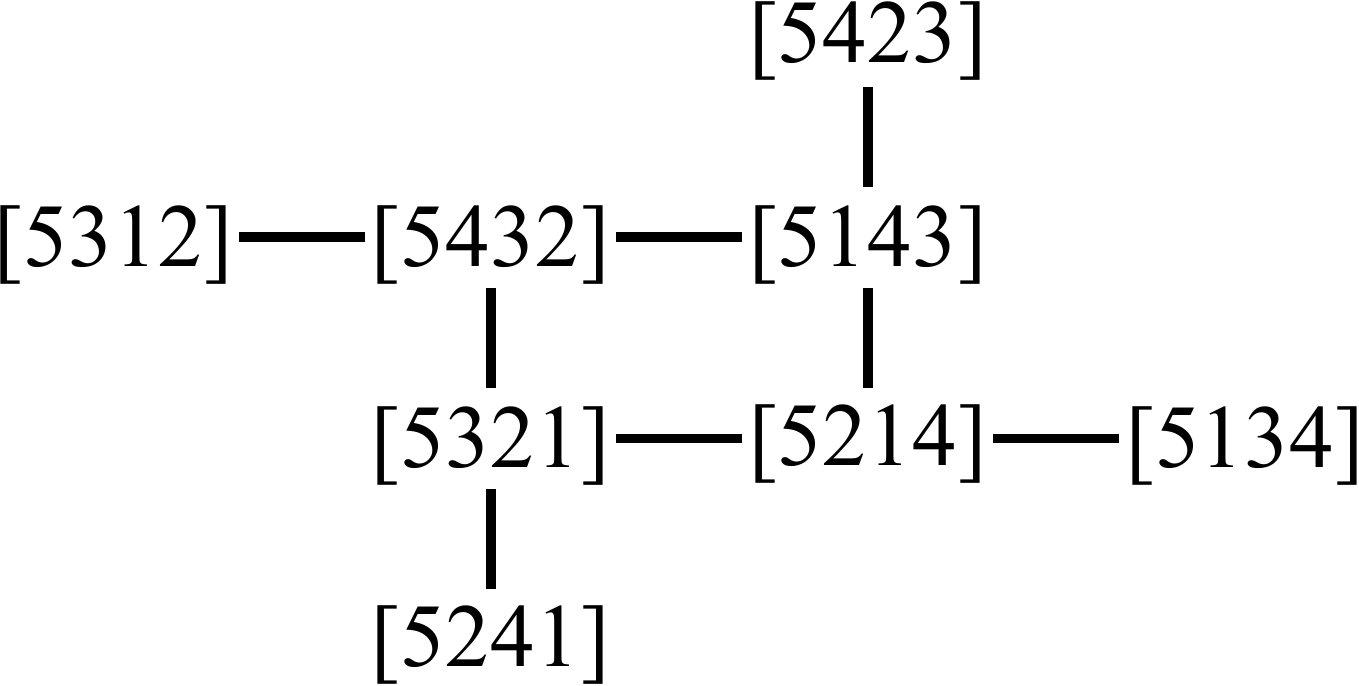}} \\
\vspace{-0.32in} \\ 
\hspace{1.2in} a) $\WilfCover{5}$ & b) $\WilfP{5}$
\end{tabular}
\vspace{-0.1in}
\caption{
a) $\succ$ b) by reducing all $\quotient{\PERMS{5}}$ vertices. 
Edge orders are clockwise.
}
\label{fig:Path5}
\end{figure}

\begin{lemma} \label{lem:degree12}
Each $\quotient{\PERMS{n}}$ vertex has degree $1$ or $2$ in $\WilfCycle{n}$ and $\WilfCover{n}$.
\end{lemma}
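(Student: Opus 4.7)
My plan is to fix a representative $\mathbf{p}=p_1 p_2 \tdots p_n$ of $[\mathbf{p}]$, let $S$ be the cyclic successor of $n$ in $\mathbf{p}$, and let $T$ be the cyclic successor of $S$. By \eqref{eq:thetap}, the $n$ Wilf-neighbors of $[\mathbf{p}]$ are the $n$ single-symbol deletions, and these are pairwise distinct since their missing symbols differ. Each deletion lands in a unique $\FixSubset{r}{m}$: deleting $k=S$ gives $\FixSubset{T}{S}$, deleting $k \in \{1,\sdots,n{-}1\} \setdiff \{S\}$ gives $\FixSubset{S}{k}$, and deleting $k=n$ gives a class missing $n$, which lies in no $\FixSubset{r}{m}$. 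The degree question then reduces to asking which of these $\FixSubset{r}{m}$ (or the singleton $[1\,2\,\tdots\,n{-}1]$ in the cycle case) are subsets of $\YCover{n}$ or $\YCycle{n}$.

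I would dispatch $\WilfCover{n}$ first. Its pair list $(1,2),(2,3),\sdots,(n{-}1,1)$ contains every missing symbol $m \in \{1,\sdots,n{-}1\}$ exactly once. So the subcase $k \notin \{S,n\}$ requires $k$ to equal a unique value, namely $(S \bmod (n{-}1)){+}1$; that value lies in $\{1,\sdots,n{-}1\}$, appears in $\mathbf{p}$, and is distinct from $S$ for $n \geq 3$, so this subcase always contributes exactly one neighbor. The subcase $k=S$ contributes $0$ or $1$ depending on whether $(T,S)$ is in the pair list, and $k=n$ contributes $0$. Hence the degree is $1$ or $2$.

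The argument for $\WilfCycle{n}$ runs in parallel, but with the extra singleton $\{[1\,2\,\tdots\,n{-}1]\}$ from \eqref{eq:YCycle}: now deleting $k=n$ contributes iff the remaining $n{-}1$ symbols of $\mathbf{p}$ lie in the cyclic order $1 \to 2 \to \sdots \to n{-}1 \to 1$. The ``other $k$'' subcase again pins a unique $k$ and contributes exactly one neighbor, while $k=S$ and $k=n$ each contribute $0$ or $1$. The step I expect to be the main obstacle is proving these last two subcases cannot simultaneously contribute: if $k=n$ contributes, then $n$ is inserted between consecutive values $j$ and $j{+}1$ of the cycle, which forces $S = j{+}1$ and $T \equiv j{+}2 \pmod{n{-}1}$, and then requiring $(T,S)$ to match the $\YCycle{n}$ list $(1,2),\sdots,(n{-}2,n{-}1),(n{-}1,2)$ becomes a modular equation with no admissible solution in the regime where the lemma applies. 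Combining the subcases yields degree $1$ or $2$.
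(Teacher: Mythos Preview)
Your proposal is correct and follows essentially the same approach as the paper: both arguments pick out the cyclic successor of $n$ (your $S$ is the paper's $p_2$, your $T$ is the paper's $p_3$), observe that deleting any symbol other than $S$ or $n$ leaves $S$ to the right of $n$ and hence forces a unique choice of deleted symbol from the $\FixSubset{r}{m}$ list, and then treat the deletion of $S$ and (for $\WilfCycle{n}$) the deletion of $n$ as two optional extra edges that cannot both occur. Your incompatibility sketch is in fact more explicit than the paper's, which simply asserts that conditions 2) and 3) are incompatible; your modular computation is exactly what is needed, and your caveat ``in the regime where the lemma applies'' correctly flags that the incompatibility requires $n\ge 5$.
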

\begin{proof}
Consider $\YCover{n}$ and an arbitrary $X = [n p_2 p_3 \tdots p_n] \in \quotient{\PERMS{n}}$.
By Remark \ref{rem:intersectSink}, we must prove that $X$ is consistent with $1$ or $2$ classes in $\YCover{n}$. 
There is a unique~$i$ where $\FixSubset{p_2}{p_i} \subseteq \YCover{n}$.
One consistency with $X$ and $\WilfCover{n}$ involves deleting~$p_i$:
\begin{enumerate}
\item[1)] $X$ and $Y=[n p_2 p_3 \tdots p_{i-1} p_{i+1} p_{i+2} \tdots p_n] \in \FixSubset{p_2}{p_i} \subseteq \YCover{n}$ are consistent.
\end{enumerate}
Notice that $p_2$ is to the right of $n$ in $Y$.
By \eqref{eq:YCover}, the only other consistency with $X$ would involve deleting $p_2$ (and thus changing the symbol to the right of~$n$):
\begin{enumerate}
\item[2)] If $\FixSubset{p_3}{p_2} \subseteq \YCover{n}$, then $X$ and $[n p_3 p_4 \tdots p_n] \in \FixSubset{p_3}{p_2} \subseteq \YCover{n}$ are~consistent.
\end{enumerate}
Thus, $X$ has degree $1$ or $2$ in $\WilfCover{n}$.
Analogous consistencies with $X$ and $\WilfCycle{n}$ are obtained by substituting $\YCycle{n}$ for $\YCover{n}$ in 1) and 2). 
In addition to its $\FixSubset{r}{m}$ subsets, $\WilfCycle{n}$ contains $ [1 2 \tdots n{-}1]$, which leads to the following consistencies:
\begin{enumerate}
\item[3)] If $p_2 p_3 \tdots p_n \in [1 2 \tdots n{-}1]$, then $X$ and $[1 2 \tdots n{-}1] \in \YCycle{n}$ are consistent.
\end{enumerate}
The conditions in 2) and 3) are incompatible, so $X$ has degree $1$ or $2$ in $\WilfCycle{n}$
\end{proof}

Let $\WilfC{n}$ and $\WilfP{n}$ be obtained from $\WilfCycle{n}$ and $\WilfCover{n}$, respectively, by reducing every $\quotient{\PERMS{n}}$ vertex. 
$\WilfC{n}$ and $\WilfP{n}$ are well-defined by Lemma \ref{lem:degree12}, and they have the same number of faces as $\WilfC{n}$ and $\WilfP{n}$ by Lemma \ref{lem:reduce}, respectively.
Figure \ref{fig:Tree6} shows the graphs of $\WilfC{6}$ and $\WilfP{6}$.
Since $\WilfCycle{n}$ and $\WilfCover{n}$ are bipartite, each edge in their reduced system corresponds to a $\quotient{\PERMS{n}}$ vertex.
For example, $([64321],[63251])$ in Figure \ref{fig:Tree6} is from the reduced vertex $[643251] \in \quotient{\PERMS{6}}$.

\begin{figure}[h]
\begin{tabular}{@{}cc@{}}
\raisebox{-1.0\height}{\includegraphics[scale=0.29]{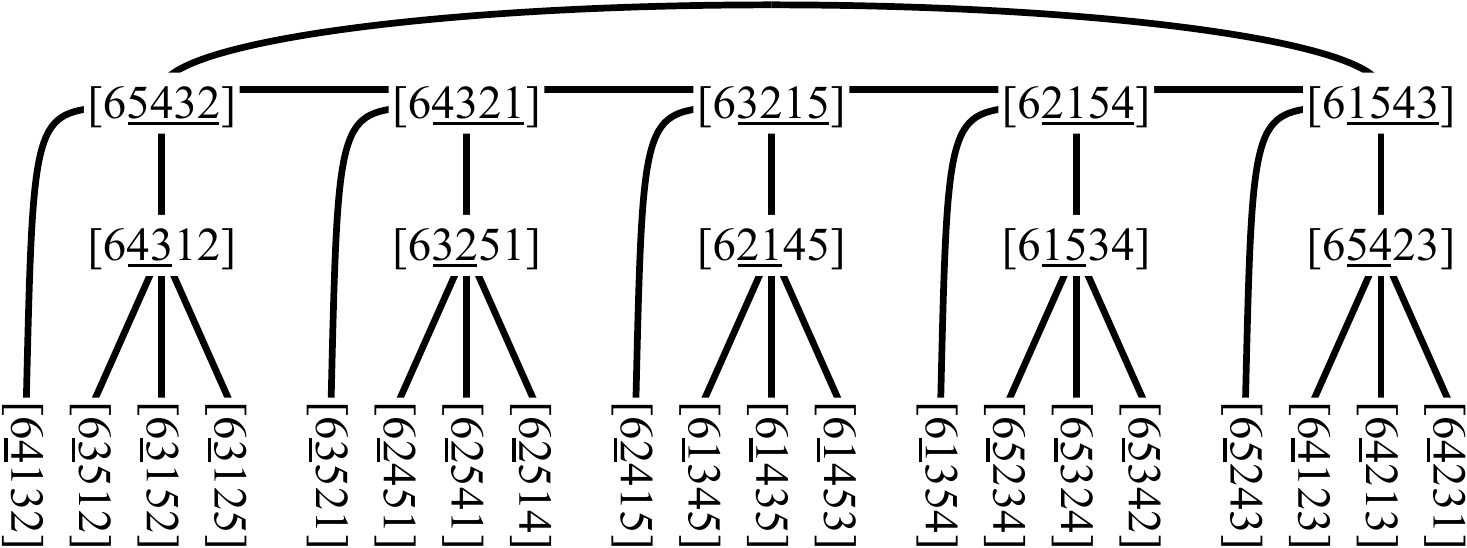}} & 
\raisebox{-1.0\height}{\includegraphics[scale=0.235]{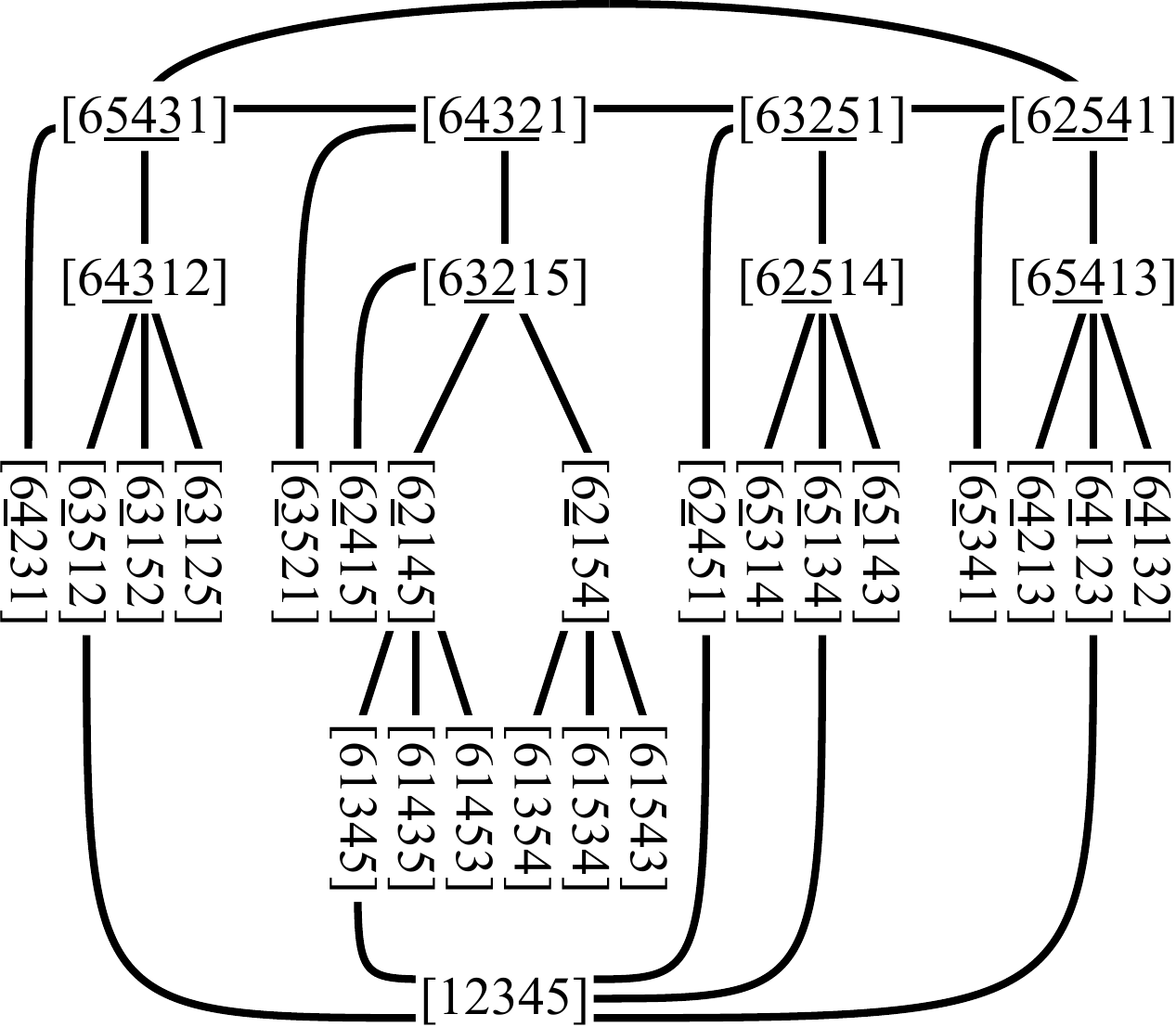}} \\
a) $\WilfP{6}$'s graph & b) $\WilfC{6}$'s graph
\end{tabular}
\vspace{-0.1in}
\caption{The reduced rotation systems with edge orders ignored.
Underlines denote $\underline{p_1 p_2 \tdots p_{\ell}}$ for each $\ell$ in the respective proofs.
}
\label{fig:Tree6}
\end{figure}

The number of faces in a rotation system and an induced rotation system are not necessarily equal.
The following lemma gives a special case where equality holds.

\begin{lemma} \label{lem:inducedEqual}
Suppose $R=(V,E,\theta)$ and $U \subseteq V$.
If $R$ and $R[U]$ are connected and have the same edge surplus, then they have the same number of faces.
\end{lemma}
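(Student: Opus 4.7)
The plan is to induct on $|V \setminus U|$, at each step deleting a degree-$1$ vertex of $V \setminus U$ from $R$. Such a deletion preserves the number of faces (Lemma \ref{lem:reduce}), the edge surplus (Remark \ref{rem:surplus}), the connectivity of $R$, and the induced system $R[U]$ itself, so the inductive hypothesis is maintained; the base case $V \setminus U = \emptyset$ gives $R = R[U]$ immediately. So the whole proof reduces to showing that whenever $V \setminus U \neq \emptyset$, the graph $R$ contains some vertex of $V \setminus U$ of degree~$1$.

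To produce such a vertex, I would partition $E \setminus E'$ into edges with both endpoints in $V \setminus U$ (call this count $a$) and edges with exactly one endpoint in $V \setminus U$ (count $b$). The equal-edge-surplus hypothesis rewrites as $|E| - |E'| = |V| - |U|$, giving $a + b = |V \setminus U|$. Then contract all of $U$ to a single vertex $\star$: the resulting multigraph has $|V \setminus U| + 1$ vertices, its non-loop edges are exactly the $a$ internal edges on $V \setminus U$ together with the $b$ crossings (now incident with $\star$), and every edge of $E'$ becomes a loop at $\star$. Because $R$ is connected, so is the contracted multigraph, which therefore requires at least $|V \setminus U|$ non-loop edges. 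Since $a + b = |V \setminus U|$ meets this bound with equality, the non-loop part is a tree on $|V \setminus U| + 1$ vertices. Consequently the subgraph of $R$ induced on $V \setminus U$ is a simple forest (no loops, no parallel edges), and each of its tree components is attached to $U$ by exactly one crossing edge.

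Picking any tree component $T$ of that forest, some vertex of $T$ has degree exactly $1$ in $R$. If $|T| = 1$, its single crossing supplies degree $1$; if $|T| \geq 2$, then $T$ has at least two leaves, of which at most one is the attaching endpoint, so some non-attaching leaf has degree $1$ in $R$. Deleting this $v$ yields $R \setminus v$, which is still connected (a degree-$1$ vertex is never a cut vertex), has the same edge surplus, and leaves $R[U]$ unchanged, so the induction goes through.

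I expect the main obstacle to be the combinatorial step extracting the forest structure on $V \setminus U$ from the edge-surplus hypothesis; the contraction of $U$ to a single vertex is what makes that step work, letting a global tree-vs-multigraph dimension count do the heavy lifting. Everything afterwards is routine bookkeeping with Lemma \ref{lem:reduce} and the remarks on surplus and components.
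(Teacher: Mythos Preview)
Your proof is correct and follows essentially the same strategy as the paper's: both arguments show that $V\setminus U$ induces a forest in $R$ with each tree component attached to $U$ by a single edge, and then peel off degree-$1$ vertices via Lemma~\ref{lem:reduce} until only $R[U]$ remains. The paper obtains the forest conclusion by summing directly over the connected components of $R[V\setminus U]$ rather than contracting $U$ to a point, and it states the deletion sequence in one line instead of wrapping it in an induction, but the content is the same.
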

\begin{proof}
Let $W = V \backslash U$ and $C_i = (W_i, E_i)$ be the connected components of $R[W]$ for $i \in \nset{k}$.
Let $e$ count the edges between $U$ and $W$ in $R$.
By summing over $i \in \nset{k}$,
\begin{align*}
\textstyle |W| = \sum |W_i| \leq \sum (|E_i|-1) = (\sum |E_i|)-k \leq (\sum |E_i|)-e
\end{align*}
where the inequalities follow from the connectedness of $C_i$ and $R$.
Since $R$ and $R[U]$ have the same edge surplus, each inequality holds with equality.
Thus, each $C_i$ is a tree with a single edge connecting it to $R[U]$.
Therefore, $R[U]$ can be obtained from $R$ by a series of degree one deletions.
Hence, the claim follows by Lemma~\ref{lem:reduce}.
\end{proof}

\subsection{Hamilton Path} \label{sec:Hamilton_path}

We now focus on $\TheCover{n}$ the Hamilton path $\ThePath{n}$.
Figure \ref{fig:Tree6}~a) illustrates the cycle $C$ and function $\ell$ from the proof of Theorem \ref{thm:TheCover} for $n=6$.

\begin{theorem} \label{thm:TheCover}
$\TheCover{n}$ is a cycle cover of size two in $\TheDigraph{n}$.
\end{theorem}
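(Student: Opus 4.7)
The plan is to convert the statement into a question about faces of a rotation system and then verify connectivity. By Lemma~\ref{lem:rules}(ii), $\TheCover{n} = \SigmaEdges \xor \ACover{n}$, and by Lemma~\ref{lem:xor} this is a cycle cover. So the only content is the word ``two.'' By Lemma~\ref{lem:equal}, the number of cycles in $\TheCover{n}$ equals the number of faces plus isolated vertices in $\WilfCover{n}$. Thus I need to show that $\WilfCover{n}$ has no isolated vertices and exactly two faces.

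No vertex is isolated: every $[\mathbf{p}] \in \quotient{\PERMS{n}}$ has degree $1$ or $2$ by Lemma~\ref{lem:degree12}, and every $Y \in \YCover{n}$ has degree $n-1 \geq 1$ in $\Wilf{n}$ and therefore positive degree in $\WilfCover{n}$ by Remark~\ref{rem:intersectSink} (each of its $n-1$ neighbors in $\Wilf{n}$ is a $\quotient{\PERMS{n}}$ class, all of which are kept). By Corollary~\ref{cor:surplusThe}, $\WilfCover{n}$ has edge surplus $1$, so by Remark~\ref{rem:facesTree} it suffices to prove that $\WilfCover{n}$ is connected, at which point it has exactly $2$ faces.

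Connectivity is the real work, and I would carry it out after reducing to $\WilfP{n}$. By Remark~\ref{rem:components}, $\WilfCover{n}$ is connected iff $\WilfP{n}$ is. The graph of $\WilfP{n}$ has $\YCover{n}$ as its vertex set and one edge for each $[\mathbf{p}] \in \quotient{\PERMS{n}}$, joining the (one or two) classes in $\YCover{n}$ with which it is consistent. To prove connectivity I would exhibit, for each $Y = [n \, r \, p_3 \, \tdots \, p_{n-1}] \in \YCover{n}$, an explicit path in $\WilfP{n}$ from $Y$ to a chosen anchor (for instance $[n \, 1 \, 2 \, \tdots \, n-2]$). The natural construction, hinted at by Figure~\ref{fig:Tree6}(a) and the underlining convention in its caption, is to work through the prefixes $p_1 p_2 \tdots p_\ell$ one symbol at a time: at each step I increase $\ell$ by choosing the unique $\quotient{\PERMS{n}}$ class (a ``bridge edge'') that, by the two consistency options in the proof of Lemma~\ref{lem:degree12}, lets one either push $n$ forward or rotate the symbol to the right of $n$ into the prescribed target value. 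Iterating these local moves drives the prefix to the anchor, producing a walk in $\WilfP{n}$ from $Y$ to the anchor.

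The main obstacle is exactly this connectivity argument: the edges of $\WilfP{n}$ are governed by the index-shifting condition ``$\FixSubset{p_3}{p_2} \subseteq \YCover{n}$'' from case~2) of Lemma~\ref{lem:degree12}, which is subtle because the allowed pairs $(r,m)$ are $(1,2),(2,3),\ldots,(n{-}2,n{-}1),(n{-}1,1)$ — a single cyclic chain. I would need to verify that this cyclic chain is rich enough that the prefix-extension walks never get stuck; equivalently, that the allowed $(r,m)$ pairs always supply a bridge edge whenever I need one. Once the walks reach the anchor for every $Y$, connectivity follows, Remark~\ref{rem:facesTree} gives two faces, and the theorem is proved.
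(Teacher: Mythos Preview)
Your reduction is exactly the paper's: via Lemmas~\ref{lem:rules}, \ref{lem:equal}, \ref{lem:degree12}, Corollary~\ref{cor:surplusThe}, and Remark~\ref{rem:facesTree}, everything comes down to showing $\WilfP{n}$ is connected. The difference is only in how the connectivity is actually established, and here your proposal stops at a sketch rather than a proof.

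Two concrete points. First, your example anchor $[n\,1\,2\,\tdots\,n{-}2]$ is not in $\YCover{n}$: it is missing $n{-}1$ with $1$ to the right of $n$, i.e.\ it lies in $\FixSubset{1}{n{-}1}$, whereas $\YCover{n}$ contains $\FixSubset{1}{2}$, not $\FixSubset{1}{n{-}1}$. Second, the ``prefix-extension'' idea is the right intuition but you have not defined the local move, nor shown it never gets stuck. The paper makes this precise by defining, for each $[\mathbf{p}]=[n\,p_1\,p_2\,\tdots\,p_{n-2}]\in\YCover{n}$, a \emph{parent}: if $[\mathbf{p}]\in\FixSubset{p_1}{p_0}$ and $\FixSubset{p_0}{p_x}\subseteq\YCover{n}$ (unique $p_0,x$), then $\parent{[\mathbf{p}]}=[n\,p_0\,p_1\,\tdots\,p_{x-1}\,p_{x+1}\,\tdots\,p_{n-2}]$, which is adjacent to $[\mathbf{p}]$ via the reduced $\quotient{\PERMS{n}}$ vertex $[n\,p_0\,p_1\,\tdots\,p_{n-2}]$. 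Iterating $\parentWalk$ from $[n\,n{-}1\,n{-}2\,\tdots\,2]$ traces out a single cycle $C$ (the unique cycle, since the edge surplus is $1$). The monotone quantity is $\ell([\mathbf{p}])=$ the largest $k\le n{-}2$ with $\FixSubset{p_{i+1}}{p_i}\subseteq\YCover{n}$ for all $i\in\nset{k{-}1}$; one checks $\ell=n{-}2$ exactly on $C$, and otherwise $\ell(\parent{[\mathbf{p}]})>\ell([\mathbf{p}])$, so every vertex has an ancestor on $C$. That is the missing piece in your argument.
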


\begin{proof}
By Lemmas \ref{lem:rules}, \ref{lem:equal}, \ref{lem:reduce}, and \ref{lem:degree12}, we must prove that $\WilfP{n}$ is bifacial.
By Remarks \ref{rem:facesTree} and \ref{rem:components}, and Corollary \ref{cor:surplusThe}, we only need to prove that $\WilfP{n}$ is connected. Consider any $[\mathbf{p}] = [n p_1 p_2 \tdots p_{n-2}]  \in \YCover{n}$.
The \emph{parent} of $[\mathbf{p}]$ replaces $p_x$ by $p_0$ as follows, 
\begin{equation}  \label{eq:parent}
\parent{[\mathbf{p}]} = [n \, p_0 \, p_1 \, p_2 \, \tdots \, p_{x-1} \, p_{x+1} \, p_{x+2} \, \tdots \, p_{n-2}] \in \FixSubset{p_0}{p_x} \subseteq \YCover{n},
\end{equation}
where $p_0$ and $x$ are unique values such that $[\mathbf{p}] \in \FixSubset{p_1}{p_0}$ and $\FixSubset{p_1}{p_0},\FixSubset{p_0}{p_x} \subseteq \YCover{n}$ from \eqref{eq:YCover}.
Note that $[\mathbf{p}]$ and $\parent{[\mathbf{p}]}$ are adjacent in $\WilfP{n}$ by mutual consistency with reduced vertex $[n \, p_0 \, p_1 \, p_2 \, \tdots p_{n-2}] \in \quotient{\PERMS{n}}$ in $\WilfCover{n}$.
The following cycle in $\WilfP{n}$ repeatedly follows parent edges from the previous vertex,
\begin{align*} 
C = [n \, n{-}1 \, n{-}2 \, \tdots \, 2] \ \parentWalk \ 
[n \, 1 \, n{-}1 \, n{-}2 \, \tdots \, 3] \ \parentWalk \
[n \, 2 \, 1 \, n{-}1 \, n{-}2 \, \tdots \, 4] \\ 
\parentWalk \ \tdots  \ \parentWalk \ 
[n \, n{-}3 \, n{-}4 \, \tdots \, 2 \, 1 \, n{-}1] \ \parentWalk \
[n \, n{-}2 \, n{-}3 \, \tdots \, 2 \, 1] \ \parentWalk.  
\end{align*}
We prove $\WilfP{n}$ is connected by showing that $[\mathbf{p}]$ is on $C$ or has an ancestor on~$C$.
Let $\ell([\mathbf{p}])$ be the largest $k \leq n{-}2$ where $\FixSubset{p_{i+1}}{p_{i}} \subseteq \YCover{n}$ for all $i \in \nset{k{-}1}$. 
Note that $[\mathbf{p}]$ is on $C$ if and only if $\ell([\mathbf{p}]) = n{-}2$; otherwise, $\ell(\parent{[\mathbf{p}]}) > \ell([\mathbf{p}])$.
\end{proof}

We split and join the two cycles of $\TheCover{n}$ to create $\ThePath{n}$.
We show that one of the cycles in $\TheCover{n}$ has length $2(n{-}1)$ and involves $\mathbf{q} = n \, n{-}1 \, \tdots \, 1$ (see Figures \ref{fig:ThePath5} and \ref{fig:TheGraphTheCover4} b)).
Definition \ref{def:ThePath} uses the simplification of Definition \ref{def:TheCover} from Section \ref{sec:intro}, except that $\mathbf{q} \tau$ has no edge entering it, and $\mathbf{q} \sigma$ is entered by a $\sigma$-edge instead of a~$\tau$-edge.

\begin{definition} \label{def:ThePath}
Let $\mathbf{p} \,{=}\, p_0 \tdots p_{n-1}$, $p_i \,{=}\, n$, $r \,{=}\, p_{(i \bmod n{-}1){+}1}$, $\mathbf{q} \,{=}\, n \, n{-}1 \, \tdots \, 1$, and~$\mathbf{p} \,{\neq}\, \mathbf{q} \tau$. 
Then $(\mathbf{p} \tau, \mathbf{p}) \,{\in}\, \ThePath{n}$ if $p_0 = (r \bmod n{-}1){+}1$ and $\mathbf{p} \neq \mathbf{q} \sigma$; 
otherwise, $(\mathbf{p} \sigma^{-1}, \mathbf{p}) \,{\in}\, \ThePath{n}$.
\end{definition} 

\begin{corollary} \label{cor:ThePath}
$\ThePath{n}$ is a Hamilton path from $\mathbf{q} \tau$ to $\mathbf{q} \sigma \tau$ for $\mathbf{q} = n \, n{-}1 \, \tdots \, 1$ in $\TheDigraph{n}$.
\end{corollary}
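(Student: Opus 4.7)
My plan is to derive the corollary from Theorem \ref{thm:TheCover} by tracking exactly which edges change between $\TheCover{n}$ and $\ThePath{n}$, and then arguing that the two cycles of $\TheCover{n}$ splice into a single Hamilton path. First I would compare Definitions \ref{def:TheCover} and \ref{def:ThePath} vertex by vertex. For every $\mathbf{p} \notin \{\mathbf{q}\tau, \mathbf{q}\sigma\}$ the two in-edge rules agree; at $\mathbf{p}=\mathbf{q}\tau$ no edge enters (so $\mathbf{q}\tau$ is a source), and at $\mathbf{p}=\mathbf{q}\sigma$ the incoming $\tau$-edge from $\mathbf{q}\sigma\tau$ is replaced by the $\sigma$-edge from $\mathbf{q}$. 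Tracing out-degrees, $\mathbf{q}$ retains out-degree one (its out-edge is $(\mathbf{q},\mathbf{q}\sigma)$ instead of $(\mathbf{q},\mathbf{q}\tau)$), $\mathbf{q}\sigma\tau$ loses its only outgoing edge and becomes a sink, and every other vertex has unchanged in- and out-degree equal to one. Hence $\ThePath{n}$ decomposes as a directed path from $\mathbf{q}\tau$ to $\mathbf{q}\sigma\tau$ plus some (possibly zero) disjoint directed cycles, so it suffices to rule out the extra cycles.

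Next I would explicitly exhibit the short cycle of $\TheCover{n}$ containing $\mathbf{q}$. Setting $\mathbf{v}_k = \mathbf{q}(\tau\sigma)^k$, a short induction gives the closed form
\[ \mathbf{v}_k = n,\,(n{-}k{-}1),\,(n{-}k{-}2),\,\ldots,\,1,\,(n{-}1),\,(n{-}2),\,\ldots,\,(n{-}k) \]
for $1 \le k \le n{-}2$, with $\mathbf{v}_0 = \mathbf{v}_{n-1} = \mathbf{q}$. For each $k \in \{0, \ldots, n{-}2\}$ I would then apply Definition \ref{def:TheCover} twice: once to $\mathbf{v}_k\tau$ to confirm that its in-edge is the $\tau$-edge $(\mathbf{v}_k, \mathbf{v}_k\tau)$ (case (a) for $0 \le k \le n{-}3$ and case (b) for the boundary $k=n{-}2$), and once to $\mathbf{v}_{k+1}$ to confirm that its in-edge is the $\sigma^{-1}$-edge $(\mathbf{v}_k\tau, \mathbf{v}_{k+1})$ (the default branch, since both cases (a) and (b) fail there). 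These checks certify that $C_A := \walk{\mathbf{q}(\tau\sigma)^{n-1}}$ is a directed cycle in $\TheCover{n}$ of length $2(n{-}1)$ through $\mathbf{q}$ and $\mathbf{q}\tau$. A brief look at the first two symbols then rules out $\mathbf{q}\sigma = (n{-}1)(n{-}2)\cdots 1\,n$ and $\mathbf{q}\sigma\tau = (n{-}2)(n{-}1)(n{-}3)\cdots 1\,n$ from the list of $\mathbf{v}_k$ and $\mathbf{v}_k\tau$, so neither lies on $C_A$.

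Finally I would assemble the pieces. Because $\TheCover{n}$ has exactly two cycles by Theorem \ref{thm:TheCover}, both $\mathbf{q}\sigma$ and $\mathbf{q}\sigma\tau$ must lie on the other cycle $C_B$. Removing $(\mathbf{q},\mathbf{q}\tau)$ opens $C_A$ into a directed path from $\mathbf{q}\tau$ to $\mathbf{q}$; removing $(\mathbf{q}\sigma\tau,\mathbf{q}\sigma)$ opens $C_B$ into a directed path from $\mathbf{q}\sigma$ to $\mathbf{q}\sigma\tau$; and the added edge $(\mathbf{q},\mathbf{q}\sigma)$ concatenates these into a single directed Hamilton path from $\mathbf{q}\tau$ to $\mathbf{q}\sigma\tau$, visiting every vertex of $\TheDigraph{n}$. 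The main obstacle will be the bookkeeping in the middle step -- pinning down the closed form for $\mathbf{v}_k$ and running the case split of Definition \ref{def:TheCover} at each of the $2(n{-}1)$ vertices of $C_A$ -- but once that explicit cycle is in hand, the splicing argument is essentially immediate.
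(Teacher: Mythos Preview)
Your proposal is correct and follows essentially the same route as the paper: exhibit the short cycle $\walk{\mathbf{q}(\tau\sigma)^{n-1}}$ in $\TheCover{n}$, compare Definitions~\ref{def:TheCover} and~\ref{def:ThePath} to obtain $\ThePath{n} = \TheCover{n} \setminus \{(\mathbf{q},\mathbf{q}\tau),(\mathbf{q}\sigma\tau,\mathbf{q}\sigma)\} \cup \{(\mathbf{q},\mathbf{q}\sigma)\}$, and then splice the two cycles into one Hamilton path. Your explicit verification that $\mathbf{q}\sigma$ and $\mathbf{q}\sigma\tau$ do not lie on the short cycle $C_A$ (and hence must lie on the other cycle $C_B$) is a detail the paper leaves implicit but which is indeed needed for the splicing to yield a single path rather than a path plus a leftover cycle.
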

\begin{proof}
Observe that $\TheCover{n}$ contains the following directed cycle, where $r$ and $p_0$ from Definition \ref{def:TheCover} are underlined and overlined, respectively,
\begin{align*}
\begin{array}{r@{ \ }c@{ \ }llllc}
\mathbf{q}(\tau \sigma)^{n-1} & = &
\overline{n} \, \underline{n{-}1} \, n{-}2 \, \tdots \, 1 & \tau &
\overline{n{-}1} \, n \, \underline{n{-}2} \, n{-}3 \, n{-}4 \, \tdots \, 1 & \sigma \\ &&
\overline{n} \, \underline{n{-}2} \, \tdots \, 1 \, n{-}1 & \tau &
\overline{n{-}2} \, n \, \underline{n{-}3} \, n{-}4 \, \tdots \, 1 \, n{-}1 & \sigma & \ldots \\ &&
\overline{n} \, \underline{1} \, n{-}1 \, n{-}2 \, \tdots \, 2 & \tau & 
\overline{1} \, n \, \underline{n{-}1} \, n{-}2 \, n{-}3 \, \tdots \, 2 & \sigma.
\end{array}
\end{align*}
By comparing Definition \ref{def:TheCover} and \ref{def:ThePath} we have the following
\begin{align} \label{eq:ThePath}
\ThePath{n} 
&= \TheCover{n} \backslash \{(\mathbf{q}, \mathbf{q} \tau), (\mathbf{q} \sigma \tau, \mathbf{q} \sigma)\} \cup \{(\mathbf{q}, \mathbf{q} \sigma)\} \text{ for } \mathbf{q} = n \, n{-}1 \, \tdots \, 1.
\end{align}
Removing $(\mathbf{q}, \mathbf{q} \tau)$ from $\TheCover{n}$ gives a directed path from $\mathbf{q} \tau$ to $\mathbf{q}$, and removing $(\mathbf{q} \sigma \tau, \mathbf{q} \sigma)$ gives a directed path from $\mathbf{q} \sigma$ to $\mathbf{q} \sigma \tau$.
These spanning directed paths are then joined by adding $(\mathbf{q}, \mathbf{q} \sigma)$.
The resulting Hamilton path is from $\mathbf{q} \tau$ to $\mathbf{q} \sigma \tau$.
\end{proof}

\begin{figure}[h]
\includegraphics[width=1.0\textwidth]{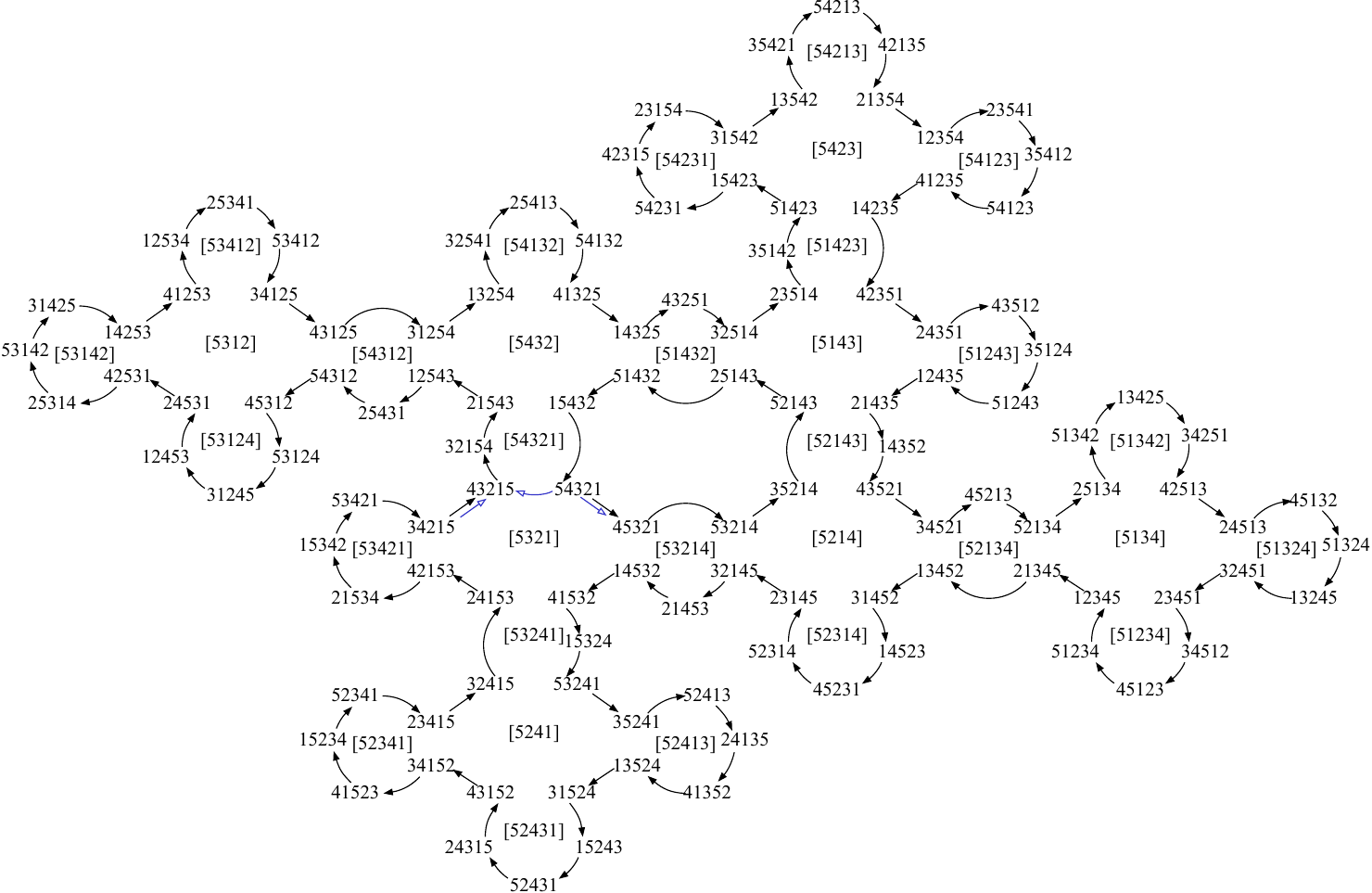}
\vspace{-0.1in}
\caption{
Hamilton path $\ThePath{5}$ from $\mathbf{q} \tau = 45321$ to $\mathbf{q} \sigma \tau = 34215$ for $\mathbf{q} = 54321$ is obtained by complementing the three blue edges in the cycle cover $\TheCover{5}$ above.
This embedding mirrors Figure~\ref{fig:Path5}. 
}
\label{fig:ThePath5}
\end{figure}

\subsection{Hamilton Cycle} \label{sec:Hamilton_cycle}

Figure \ref{fig:Tree6} b) shows cycle $R$ and function $\ell$ from the proof of Theorem \ref{thm:TheCycle} for $n=6$, and Figure \ref{fig:WheeelPaths6} shows the induced rotation system for $n=7$.
  
\begin{figure}[h]
\includegraphics[scale=0.4]{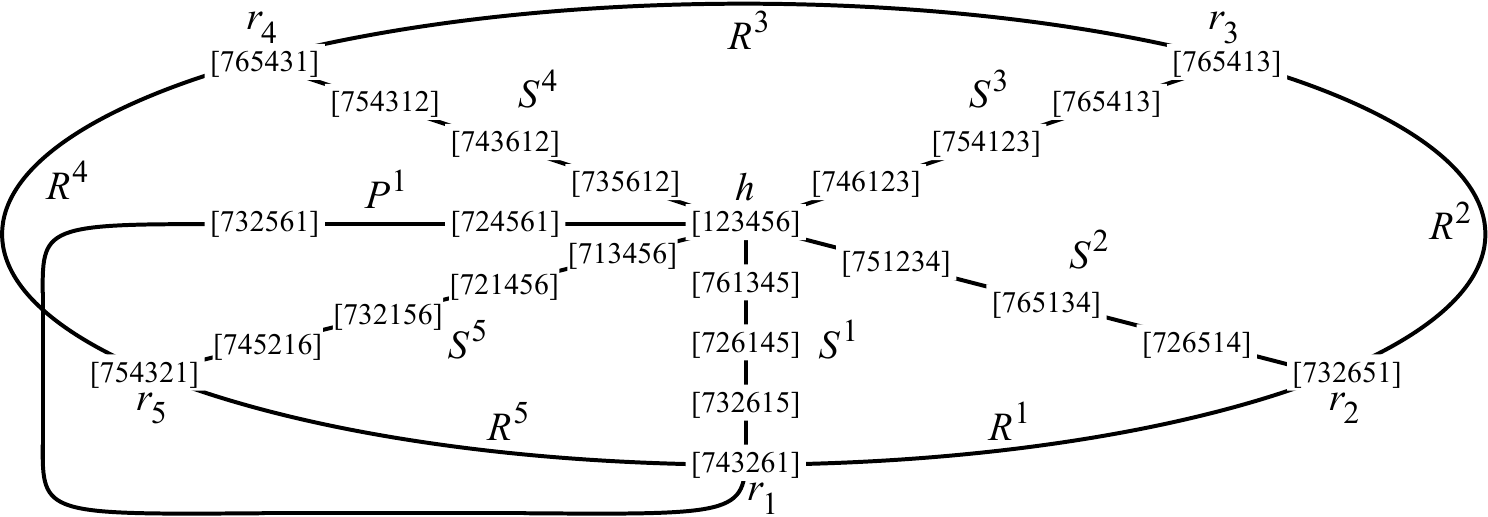}
\vspace{-0.1in}
\caption{
The connected induced rotation system $\WilfC{7}[U]$ from the proof of Theorem \ref{thm:TheCycle}, which reduces to $\Wheeel{6}$.
}
\label{fig:WheeelPaths6}
\end{figure}

\begin{theorem} \label{thm:TheCycle}
$\TheCycle{n}$ is a Hamilton cycle in $\TheDigraph{n}$ when $n$ is odd.
\end{theorem}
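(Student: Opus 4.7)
The plan is to reduce $\WilfC{n}$ to the spinning wheeel $\Wheeel{n{-}1}$ and invoke Lemma~\ref{lem:Wheeel}. Since $n{-}1$ is even when $n$ is odd, $\Wheeel{n{-}1}$ is unifacial, hence $\WilfC{n}$ has exactly one face, which by Lemmas \ref{lem:rules}, \ref{lem:equal}, \ref{lem:reduce}, and \ref{lem:degree12} is equivalent to $\TheCycle{n}$ being a single Hamilton cycle.

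First I would verify that $\WilfC{n}$ is connected. The approach mirrors the proof of Theorem~\ref{thm:TheCover}: define a parent function $\parentWalk$ on $\YCycle{n}$ analogous to \eqref{eq:parent}, together with a level function $\ell$ measuring the largest prefix of ``chained'' $\FixSubset{p_{i+1}}{p_i}$ conditions. Then exhibit a canonical cycle $R$ in the reduced graph (as in Figure~\ref{fig:Tree6}b) through the vertices $[n,n{-}1,n{-}2,\tdots,3,1]$, $[n,1,n{-}1,\tdots,4,2]$, and so on, together with the hub vertex $[1\,2\,\tdots\,n{-}1]$ and its spoke neighbours, and argue that repeatedly applying $\parentWalk$ carries every other vertex of $\YCycle{n}$ into $R$ by strictly increasing $\ell$.

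Next I would identify a subset $U \subseteq V(\WilfC{n})$ consisting precisely of the hub $[1\,2\,\tdots\,n{-}1]$, the $n{-}1$ rim vertices described above, and their immediately incident $\quotient{\PERMS{n}}$ vertices (both the ones that become rim edges and the ones that become spokes). Count: $|U|$ and the number of edges are arranged so that $\WilfC{n}[U]$ has edge surplus $n{-}1$, matching Corollary~\ref{cor:surplusThe}. Because $\WilfC{n}[U]$ is connected and has the same edge surplus as $\WilfC{n}$, Lemma~\ref{lem:inducedEqual} gives that the two systems have the same number of faces, so it suffices to analyze $\WilfC{n}[U]$.

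Then I would reduce $\WilfC{n}[U]$ by deleting/smoothing every $\quotient{\PERMS{n}}$ vertex (allowed by Lemma~\ref{lem:degree12}) and exhibit an explicit isomorphism of the result with $\Wheeel{n{-}1}$. The hub is $h = [1\,2\,\tdots\,n{-}1]$; the rim vertices $r_1,\sdots,r_{n{-}2}$ are the $\YCycle{n}$-vertices of $R$ in cyclic order; each rim edge $R^j$ is inherited from a ``consecutive'' reduced vertex, each spoke $S^j$ from a ``hub-incident'' reduced vertex, and the parallel spoke $P^1$ comes from the extra class $[1\,2\,\tdots\,n{-}1]\in\YCycle{n}$, which is what makes $\YCycle{n}$ larger than $\YCover{n}$. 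The edge orders at each $r_j$ and at $h$ are read off from the left-to-right deletion rule \eqref{eq:thetap} and right-to-left insertion rule \eqref{eq:thetaq}, and must be shown to coincide with the prescribed $\theta(r_j)$ and $\theta(h)$ of Section~\ref{sec:rotation_Wheeel} (up to consistent choice of \CW/\CCW\ per vertex). Applying Lemma~\ref{lem:reduce} together with Lemma~\ref{lem:Wheeel} finishes the proof.

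The main obstacle is the bookkeeping at Step three: verifying that the cyclic edge order inherited from $\Wilf{n}$ around the hub and around each $r_j$ actually matches the spinning-wheeel orders $\theta(h) = S^1,S^2,\sdots,S^{m-2},P^1,S^{m-1}$ and $\theta(r_1)=S^1,R^1,P^1,R^{m-1}$, $\theta(r_i)=S^i,R^i,R^{i-1}$. The asymmetric treatment of $r_1$ and $r_{m-1}$, together with the precise insertion position of the parallel edge $P^1$ corresponding to $[1\,2\,\tdots\,n{-}1]$, has to be traced through the deletion/insertion rules and through the smoothing operation, and this is where a careful case analysis (keeping track of the cyclic position of $n$ in each representative) is unavoidable.
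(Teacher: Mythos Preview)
Your plan matches the paper's proof: establish connectedness of $\WilfC{n}$ via a parent/level argument, pass to an induced subsystem $\WilfC{n}[U]$ with the same edge surplus $n{-}1$ (so Lemma~\ref{lem:inducedEqual} applies), reduce it to $\Wheeel{n{-}1}$, and finish with Lemma~\ref{lem:Wheeel}. One correction to your description of $U$: since $\WilfC{n}$ is already obtained from $\WilfCycle{n}$ by reducing every $\quotient{\PERMS{n}}$ vertex, $U$ must be a subset of $\YCycle{n}$; in the paper $U$ is the union of all vertices on $n{-}1$ spoke \emph{paths} in $\WilfC{n}$ from the hub $h=[1\,2\,\tdots\,n{-}1]$ to the $n{-}2$ rim vertices (two of these paths end at $r_1$, which is what produces the parallel edge $P^1$), and the final smoothing removes the intermediate $\YCycle{n}$ vertices along those paths rather than any $\quotient{\PERMS{n}}$ vertices.
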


\begin{proof}
By Lemmas \ref{lem:rules}, \ref{lem:equal}, \ref{lem:reduce}, and \ref{lem:degree12}, we must prove that $\WilfC{n}$ is unifacial for odd~$n$.
By Remarks \ref{rem:surplusWheeel} and \ref{rem:components}, Lemmas \ref{lem:Wheeel} and \ref{lem:inducedEqual}, and Corollary \ref{cor:surplusThe}, this is true if $\WilfC{n}$ is connected and $\WilfC{n}[U] \succ \Wheeel{n{-}1}$ for some $U \subseteq \YCycle{n}$.
To match the definition of $\Wheeel{n{-}1}$, our $U$ has a hub vertex $h=[1 \, 2 \, \tdots \, n{-}1]$, rim cycle $R$, and spoke paths $S^j$ and $P^1$ from $h$ to $R$. 
We define a parent relationship analogous to \eqref{eq:parent}. 
Consider any $[\mathbf{p}] = [n p_1 p_2 \tdots p_{n-2}]  \in \YCycle{n} \setdiff \{h\}$. 
The \emph{parent} of $[\mathbf{p}]$ replaces $p_x$ by $p_0$ as in \eqref{eq:parent}, where $p_0$ and $x$ are the unique values such that $[\mathbf{p}] \in \FixSubset{p_1}{p_0}$ and $\FixSubset{p_1}{p_0},\FixSubset{p_0}{p_x} \subseteq \YCycle{n}$ from \eqref{eq:YCycle}. 
Note that $[\mathbf{p}]$ and $\parent{[\mathbf{p}]}$ are adjacent in $\WilfC{n}$ by mutual consistency with reduced vertex $[n \, p_0 \, p_1 \, p_2 \, \tdots p_{n-2}] \in \quotient{\PERMS{n}}$ in $\WilfCycle{n}$.
The rim cycle in $\WilfC{n}$ is,
\begin{align*}
R =&\ [n \, n{-}1 \, n{-}2 \, \tdots \, 3 \, 1] \ \parentWalk \ 
[n \, 2 \, n{-}1 \, n{-}2 \, \tdots \, 4 \, 1] \ \parentWalk \ 
[n \, 3 \, 2 \, n{-}1 \, n{-}2 \, \tdots \, 5 \, 1] \\ 
&\ \parentWalk \ \tdots \ \parentWalk \ 
[n \, n{-}3 \, n{-}4 \, \tdots \, 2 \, n{-}1 \, 1] \ \parentWalk \ 
[n \, n{-}2 \, n{-}3 \, \tdots \, 2 \, 1] \ \parentWalk.
\end{align*}
We prove $\WilfC{n}$ is connected by showing that $[\mathbf{p}]$ is on $R$ or has an ancestor on $R$.
Let $\ell([\mathbf{p}])$ be the largest $k \leq n{-}3$ where $\FixSubset{p_{i+1}}{p_{i}} \subseteq \YCycle{n}$ and $p_i \neq 1$ for $i \in \nset{k{-}1}$. 
Note that $[\mathbf{p}]$ is on $R$ if and only if $\ell([\mathbf{p}]) = n{-}3$; otherwise, $\ell(\parent{[\mathbf{p}]}) > \ell([\mathbf{p}])$.
To define our spoke paths from $h$ to $R$ we first label the vertices on $R$ by going the other way around the cycle as follows,
\begin{align*}
r_1 &= [n \, n{-}3 \, n{-}4 \, \tdots \, 2 \, n{-}1 \, 1] & 
    r_{n-4} &= [n \, 2 \, n{-}1 \, n{-}2 \, \tdots \, 4 \, 1] \\
r_2 &= [n \, n{-}4 \, n{-}5 \, \tdots \, 2 \, n{-}1 \, n{-}2 \, 1] &
    r_{n-3} &= [n \, n{-}1 \, n{-}2 \, \tdots \,  3 \, 1] \\
r_3 &= [n \, n{-}5 \, n{-}6 \, \tdots \, 2 \, n{-}1 \, n{-}2 \, n{-}3 \, 1] \ \ldots &
    r_{n-2} &= [n \, n{-}2 \, n{-}3  \, \tdots  \, 2 \, 1].
\end{align*}
We label the incidences with $h$ in $\WilfC{n}$ as follows,
\begin{align*}
e_1 &= (h, [n \, n{-}1 \, 1 \, 3 \, 4 \, \tdots \, n{-}2]) & e_{n-4} &= (h, [n \, 4 \, 6 \, 7 \, \tdots \, n{-}1 \, 1 \, 2 \, 3]) \\
e_2 &= (h, [n \, n{-}2 \, 1 \, 2 \, \tdots \, n{-}3])  & e_{n-3} &= (h, [n \, 3 \, 5 \, 6 \, \tdots \, n{-}1 \, 1 \, 2]) \\
e_3 &= (h, [n \, n{-} 3 \, n{-}1 \, 1 \, 2 \, \tdots \, n{-}4]) &  f &= (h, [n \, 2 \, 4 \, 5 \, \tdots \, n{-}1 \, 1]) \\
& & e_{n-2} &= (h, [n \, 1 \, 3 \, 4 \, \tdots \, n{-}1]).
\end{align*}
The spoke paths are defined below, where $\cdots$ denotes repeated parent edges,
\begin{align*}
& S^1 =  h \; e_1 \; 
[n \; n{-}1 \; 1 \; 3 \; 4 \; \tdots \; n{-}2] \, 
\cdots \, 
[n \; n{-}4 \; n{-}5 \; \tdots \;2 \; n{-}1 \; 1 \; n{-}2] \;  \parentWalk \;
r_{1} \\ 
& S^2 =  h \; e_2 \;
[n \; n{-}2 \; 1 \; 2 \; \tdots \; n{-}3] \,   
\cdots \, 
[n \; n{-}5 \; n{-}6 \; \tdots \; 2 \; n{-}1 \; n{-}2 \; n{-}3 \; 1] \;  \parentWalk \;
r_{2} \\ 
& S^3 =  h \; e_3 \;
[n \; n{-}3 \; n{-}1 \; 1 \; 2 \; \tdots \; n{-}4] \,  
\cdots \, 
[n \; n{-}6 \; n{-}7 \; \tdots \; 2 \; n{-}1 \; n{-}2 \; n{-}3 \; n{-}4 \; 1] \;  \parentWalk \;
r_{3} \\ 
& \ldots =  \ldots \\
& S^{n-4} =  h  \; e_{n-4} \;
[n \; 4 \; 6 \; 7 \; \tdots \; n{-}1 \;1 \; 2 \; 3] \, 
\cdots \,   
[n \; n{-}1 \; n{-}2 \; \tdots \;4 \; 1 \; 3] \;  \parentWalk \;
r_{n-4} \\ 
& S^{n-3}  =  h \; e_{n-3} \;
[n \; 3 \; 5 \; 6 \; \tdots \; n{-}1 \; 1 \; 2] \, 
\cdots \, 
[n \; n{-}2 \; n{-}3 \; \tdots \; 3 \; 1 \; 2] \;  \parentWalk \; 
r_{n-3 } \\ 
& P^{1}  =  h \; f \;
[n \; 2 \; 4 \; 5 \; \tdots \; n{-}1 \; 1] \,  
\cdots \, 
[n \; n{-}4 \; n-5 \; \tdots \; 2 \; n{-}2 \; n{-}1 \; 1] \;  \parentWalk \; 
r_{1} \\ 
& S^{n-2}  =  h \;  e_{n-2} \;
[n \; 1 \; 3 \; 4 \; \tdots \; n{-}1] \,  
\cdots \,  
[n \; n{-}3 \; n{-}4  \; \tdots \; 2 \; 1 \; n{-}1] \;  \parentWalk \;
r_{n-2}. 
\end{align*}
Let $U$ be the union of vertices on these spoke paths. 
By smoothing the paths, $\WilfC{n}[U]$ reduces to a rotation system on the $(n{-}1)$-wheeel graph.
To complete the proof, we must consider edge orders. 
Let the edge orders of $\Wilf{n}$, $\WilfCycle{n}$, $\WilfC{n}$, and $\WilfC{n}[U]$ be $\theta$, $\theta'$, $\theta''$, and $\theta'''$, respectively.
The cyclic orders for $h$ are 
\begin{align*}
\theta(h) = &\
(h, [n \, n{-}1 \, 1 \, 2 \tdots n{-}2]), 
(h, [n \, n{-}2 \, n{-}1 \, 1 \, 2 \tdots n{-}3]), 
(h, [n \, n{-} 3 \, n{-}2 \, n{-}1 \, 1 \, 2 \tdots n{-}4]), \\
&\ \ldots, \ (h, [n \, 3 \, 4 \, \tdots \, n{-}1 \, 1 \, 2]), \,
(h, [n \, 2 \, 3 \, \tdots \, n{-}1 \, 1]), \,
(h, [n \, 1 \, 2 \, \tdots \, n{-}1]) \\
\theta'(h) = &\
e_1, e_2, e_3, \ldots, e_{n-3}, f, e_{n-2} \\
\theta''(h) =&\
\theta'(h) \\
\theta'''(h) =&\
(h, r_1), \ (h, r_2), \ (h, r_3), \ \ldots, \ (h, r_{n-3}), \ (h, r_{1}), \ (h, r_{n-2})
\end{align*}
where the two copies of $(h, r_1)$ in $\theta'''(h)$ are due to $S^1$ and $P^1$, respectively.
We briefly explain: 
$\theta(h)$ inserts $h$'s missing symbol $n$ from right-to-left; 
$\theta'(h)$ is obtained from $\theta(h)$ by replacing each $\quotient{\PERMS{n}}$ vertex with the unique vertex in $\YCycle{n} \setdiff \{h\}$ that is consistent with it;
$\theta''(h) = \theta'(h)$ since each of the incident vertices is in $U$;
$\theta'''(h)$ is obtained from $\theta''(h)$ by reducing the paths.
The cyclic orders for $r_1$ are
\begin{align*}
\theta(r_1) = &\ 
(r_1, [n \, n{-}3 \, n{-}4 \, \tdots \, 2 \, n{-}1 \, 1 \, n{-}2]), \,
(r_1, [n \, n{-}3 \, n{-}4 \, \tdots \, 2 \, n{-}1 \, n{-}2 \, 1]), \\ &\ 
(r_1, [n \, n{-}3 \, n{-}4 \, \tdots \, 2 \, n{-}2 \, n{-}1 \, 1]), \,
\ldots, \,
(r_1, [n \, n{-}2 \, n{-}3 \, n{-}4 \, \tdots \, 2 \, n{-}1 \, 1]) \\
\theta'(r_1) = &\ 
(r_1, [n \, n{-}4 \, n{-}5 \tdots \, 2 \, n{-}1 \, 1 \, n{-}2]), \,
(r_1, [n \, n{-}4 \, n{-}5 \, \tdots \, 2 \, n{-}1 \, n{-}2 \, 1]), \\ &\
(r_1, [n \, n{-}4 \, n{-}5 \, \tdots \, 2 \, n{-}2 \, n{-}1 \, 1]), \,
\ldots, \,
(r_1, [n \, n{-}2 \, n{-}3 \, n{-}4 \, \tdots \, 2 \, 1]) \\
\theta''(r_1)= &\ 
(r_1, [n \, n{-}4 \, n{-}5 \tdots \, 2 \, n{-}1 \, 1 \, n{-}2]), \,
(r_1, r_2), \\ &\
(r_1, [n \, n{-}4 \, n{-}5 \, \tdots \, 2 \, n{-}2 \, n{-}1 \, 1]), \,
(r_1, r_{n-2}) \\
\theta'''(r_1)= &\ 
(r_1, h), \,
(r_1, r_2), \,
(r_1, h), \,
(r_1, r_{n-2})
\end{align*}
where the two copies of $(r_1, h)$ on the last line are due to $S^1$ and $P^1$, respectively.
We briefly explain: 
$\theta(r_1)$ inserts $r_1$'s missing symbol $n{-}2$ from right-to-left; 
$\theta'(r_1)$ is obtained from $\theta(r_1)$ by replacing each $\quotient{\PERMS{n}}$ vertex with the unique vertex in $\YCycle{n} \setdiff \{r_1\}$ that is consistent with it;
$\theta''(r_1)$ is obtained from $\theta'(r_1)$ by including only those edges that have both incident vertices in $U$; 
$\theta'''(h)$ is obtained from $\theta''(h)$ by reducing $S^1$ and $P^1$, respectively.
By similar arguments, 
\begin{align*}
\theta'''(r_i) = &\
(r_1, h), \,
(r_{i}, r_{i+1}), \,
(r_{i}, r_{i-1})
\text{ for } 2 \leq i \leq n{-}2.
\end{align*}
Therefore, the edge orders are correct, and so $\WilfC{n}[U] \succ \Wheeel{n{-}1}$.
\end{proof}

\begin{corollary} \label{cor:fewestTau}
$\TheCycle{n}$ uses the fewest $\tau$-edges of any Hamilton cycle in $\TheDigraph{n}$, and $\TheCover{n}$ uses the fewest $\tau$-edges of any cycle cover of size two in $\TheDigraph{n}$.
\end{corollary}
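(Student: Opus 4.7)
The plan is to count $\tau$-edges using the decomposition from Lemma \ref{lem:xor}. Any cycle cover $D$ in $\TheDigraph{n}$ factors as $D = \SigmaEdges \xor (A_1 \cup \cdots \cup A_h)$, where each $A_i = a(Y_i)$ for distinct $Y_i \in \quotient{\MISSING{n}}$. Because $\SigmaEdges$ contains no $\tau$-edges and distinct alternating-cycles are edge-disjoint, the set of $\tau$-edges of $D$ coincides with the set of $\tau$-edges in $A_1 \cup \cdots \cup A_h$; since each $a(Y_i)$ has exactly $n{-}1$ $\tau$-edges by the construction in Lemma \ref{lem:cycles}, $D$ contains exactly $h(n{-}1) = |\mathbf{Y}|(n{-}1)$ $\tau$-edges, where $\mathbf{Y} = \{Y_1, \ldots, Y_h\}$.

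The corollary therefore reduces to lower bounds on $|\mathbf{Y}|$. By Lemma \ref{lem:equal}, the size of $D$ equals the number of faces plus isolated vertices in $\Wilf{n}[\quotient{\PERMS{n}} \cup \mathbf{Y}]$. Consequently, $D$ is a Hamilton cycle precisely when this induced rotation system is unifacial with no isolated $\quotient{\PERMS{n}}$ vertex, and $D$ is a cycle cover of size two precisely when its face-plus-isolated count is two. The discussion following Corollary \ref{cor:surplusThe}, which combines the edge-surplus formula of Lemma \ref{lem:surplusInduced} with Remark \ref{rem:facesTree}, has already established that $\YCycle{n}$ and $\YCover{n}$ are the sparsest $\mathbf{Y}$'s meeting these two conditions, so $|\mathbf{Y}| \geq |\YCycle{n}|$ in the first case and $|\mathbf{Y}| \geq |\YCover{n}|$ in the second.

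To finish, I would appeal to Lemma \ref{lem:rules}: $\TheCycle{n}$ is realized by $\mathbf{Y} = \YCycle{n}$ and $\TheCover{n}$ by $\mathbf{Y} = \YCover{n}$, so they use exactly $|\YCycle{n}|(n{-}1)$ and $|\YCover{n}|(n{-}1)$ $\tau$-edges respectively, matching the lower bounds. There is no genuine obstacle here: the sparsity analysis in Section \ref{sec:Hamilton_surplus} does the real work, after which the corollary is simply the observation that every $\tau$-edge of $D$ must be contributed by one of its alternating-cycle summands, so minimizing $\tau$-edges is equivalent to minimizing the number of alternating cycles in the symmetric-difference decomposition.
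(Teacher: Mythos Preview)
Your proposal is correct and follows essentially the same approach as the paper: decompose an arbitrary cycle cover via Lemma~\ref{lem:xor}, observe that the number of $\tau$-edges equals $(n{-}1)h$, and then invoke the edge-surplus discussion of Section~\ref{sec:Hamilton_surplus} to see that $h$ is minimized by $\YCycle{n}$ and $\YCover{n}$ respectively. The paper's own proof is a three-line compression of exactly this argument; you have simply unpacked the references to Lemmas~\ref{lem:equal} and~\ref{lem:rules} that the paper leaves implicit in the phrase ``$h$ is minimized by the discussion in Section~\ref{sec:Hamilton_surplus}.''
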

\begin{proof}
The number of $\tau$-edges in $D=E_\sigma \xor (A_1 \cup A_2 \cup \tdots \cup A_h)$ is $(n{-}1)h$, and $h$ is minimized by the discussion in Section \ref{sec:Hamilton_surplus}.
Thus, the result is due to Lemma~\ref{lem:xor}.
\end{proof}

Our last remark is due to the proof of Theorem \ref{thm:TheCycle} and the parity of Lemma~\ref{lem:Wheeel}.

\begin{remark} \label{rem:TheOtherCover}
$\TheCycle{n}$ is a cycle cover of size two in $\TheDigraph{n}$ when $n$ is even.
\end{remark}

\section{Concluding Remarks}

We proved that the directed sigma-tau $\sigma$-$\tau$ graph $\TheCayleyDigraph{n}$ has a Hamilton cycle $\TheCycle{n}$ when $n$ is odd, as well as a size two disjoint cycle cover $\TheCover{n}$ and Hamilton path $\ThePath{n}$ for all $n$.
This settles a longstanding problem in the well-studied area of Hamilton cycles in Cayley graphs.
Of particular interest is that $\TheCycle{n}$ and $\TheCover{n}$ are generated by simple rules, and that the underlying rotation systems are as close to trees as possible. 
In fact, the constructions were largely guided by these two goals. 
The simplicity of our local rules contrast the difficult recursive construction given by Compton and Williamson \cite{Compton} for the undirected sigma-tau graph $\TheCayleyGraph{n}$. 

The author thanks Alexander Holroyd (Microsoft Research) and Joe Sawada (University of Guelph) whose comments contributed to this article's presentation.

\begin{table}[h]
\begin{tabular}{|@{\,}c@{\,}|@{\,}c@{\,}|@{\,}c@{\,}||@{\,}c@{\,}|@{\,}c@{\,}|@{\,}c@{\,}|} \hline 
Symbol & Description & \S & Symbol & Description & \S \\ \hline \hline 
$\TheDigraph{n}$ & $\TheCayleyDigraph{n}$ & \ref{sec:intro} &
$\SigmaEdges$ & $\sigma$-edges in $\TheDigraph{n}$ & \ref{sec:intro} \\
$\TheCycle{n}$ & Hamilton cycle (odd $n$) & \ref{sec:intro} &
$\TauEdges$ & $\tau$-edges in $\TheDigraph{n}$ & \ref{sec:intro} \\
$\TheCover{n}$ & Cycle cover of size two & \ref{sec:intro} &
$\ThePath{n}$ & Hamilton path & \ref{sec:Hamilton_path}  \\
\hline
$\PERMS{n}$ & Permutation strings & \ref{sec:preliminaries_strings} &
$\SigmaCycles{n}$ & $\sigma$-cycle edges & \ref{sec:structure_basic} \\
$\MISSING[m]{n}$, $\MISSING{n}$ & Missing $m$, missing one & \ref{sec:preliminaries_strings} &
$\AltCycles{n}$ & Alternating-cycle edges & \ref{sec:structure_basic} \\
$\sim$ & Rotational equivalence & \ref{sec:preliminaries_equiv} & 
$s$ & Map $\quotient{\PERMS{n}}$ to $\SigmaCycles{n}$ & \ref{sec:structure_basic} \\
$\walkName$,$\edgesName$ & Walk, edges from $\sigma$ and $\tau$ & \ref{sec:preliminaries_walks} & 
$a$ & Map $\quotient{\MISSING{n}}$ to $\AltCycles{n}$ & \ref{sec:structure_basic} \\ 
\hline
$\FixSubset[n]{r}{m}$ & $\subseteq \quotient{\MISSING[m]{n}}$ with $r$ right of $n$ & \ref{sec:structure_covers} &
$\sink{S}{A}$ & Intersection of cycles & \ref{sec:rotation_Wilf} \\
$\YCycle{n}$ & $\subseteq \quotient{\MISSING{n}}$ related to $\TheCycle{n}$ & \ref{sec:structure_covers} &
$\ACycle{n}$ & Alt-cycles from $\YCycle{n}$ & \ref{sec:structure_covers} \\
$\YCover{n}$ & $\subseteq \quotient{\MISSING{n}}$ related to $\TheCover{n}$ & \ref{sec:structure_covers} &
$\ACover{n}$ & Alt-cycles from $\YCover{n}$ & \ref{sec:structure_covers} \\
\hline
$\Wilf{n}$ & Wilf's rotation system & \ref{sec:rotation_Wilf} &
$\Wheeel{n}$ & Rotation system & \ref{sec:rotation_Wheeel} \\
$\WilfCycle{n}$ & Induce $\Wilf{n}$ with $\YCycle{n}$ & \ref{sec:Hamilton} & 
$\WilfC{n}$ & Reduced $\WilfCycle{n}$ & \ref{sec:Hamilton_reduce} \\
$\WilfCover{n}$ & Induce $\Wilf{n}$ with $\YCover{n}$ & \ref{sec:Hamilton} & 
$\WilfP{n}$ & Reduced $\WilfCover{n}$ & \ref{sec:Hamilton_reduce} \\
\hline
\end{tabular}
\caption{Summary of notation and the section in which it is defined.}
\label{tab:summary}
\end{table}

\noindent Aaron Williams \\
\noindent Department of Mathematics and Statistics, McGill University \\
\noindent \texttt{haron@uvic.ca}

\end{document}